 \DeclareMathOperator*{\argmin}{argmin}
\newcommand{\grad}{\nabla}
\title{Tensor Deblurring and Denoising Using Total Variation\thanks{Submitted to the editors October xx, 2021.
\funding{This work was funded by National Science Foundation under Grant Nos. DMS-1439786 and MCB-2126374.}}}
\author{Fatoumata Sanogo\thanks{Department of Statistical and Data Sciences, Smith College, Northampton, MA 01063
  (\email{fsanogo@smith.edu)}.}
  \and Carmeliza Navasca\thanks{Department of  Mathematics, University of Alabama at Birmingham, Birmingham, AL 35294
  (\email{cnavasca@uab.edu)}.}
\and Stefan Kindermann\thanks{Industrial Mathematics Institute, Johannes Kepler Universitat Linz, Altenbergerstrasse 69, A-4040 Linz, Austria
  (\email{kindermann@indmath.uni-linz.ac.at)}.}
}
\DeclareMathOperator{\diag}{diag}
\begin{document}

\maketitle

\begin{abstract}
We consider denoising and deblurring problems for tensors.
While images can be discretized as matrices, 
the analogous procedure for  color images or videos leads  to 
a tensor formulation. 
We extend the classical ROF functional for 
variational denoising and deblurring to the tensor case by employing multi-dimensional total variation regularization. Furthermore, the resulting minimization problem is calculated by 
the FISTA method generalized to the tensor case. We provide some numerical experiments 
by applying the scheme to the denoising, 
the deblurring, and the recoloring of color images
as well as to the deblurring of videos. 
\end{abstract}
\begin{keywords}
  tensor, regularization, total variation, ROF
\end{keywords}


\section{Introduction}


Digital image restoration is an important task in image processing since it can be applied to various areas of applied sciences such as medical and astronomical imaging, film restoration, and image and video coding. There  are  already various methods 
(most of them not tensor-based)  to recover signal/images from noise and blurry observations, 
for instance,  statistical-based approaches \cite{IMG, Demoment}, 
methods employing Fourier and/or wavelet transforms \cite{Nee, Kat}, or variational methods~\cite{Rudin, Chambolle}. 
Among the variational methods, 
total variation (TV) regularization is one of the most prominent examples.  TV regularization, first introduced by Rudin, Osher, and Fatemi (ROF) in 1992 \cite{ROF}, has become one of the most used techniques in image processing and computer vision because it is known to remove noises while preserving sharp edges and boundaries. It has since evolved from an image denoising method into a more general technique applied to various inverse problems such as deblurring, blind deconvolution, and inpainting. 
 The results in this article are an extension to tensors of 
 the ROF methodology and the techniques describe by Teboulle and Beck in \cite{TeboulleBeck}. Teboulle and Beck's  approach was to minimize the TV-functional  using gradient projection,  proximal mappings and Nestorov acceleration, applied to the dual functional of ROF that was proposed by Chambolle in \cite{Antonin, Cham}.
Here we extend these algorithms to multi-channel images and videos.
After discretization, these objects can be represented as 
tensors.  For simplicity, most formulas that  we present are for third-order tensors. However, keep in mind that this work 
can be extended to any n-th order  tensor with little effort.

We provide  some details  for the total variation functional
in Section~\ref{Sec2}, describe the minimization algorithms 
that we use in Section~\ref{sec:fista}, and 
then present our numerical approach for the 
denoising and deblurring for tensors using total variation in the subsequent sections. 
We will provide some numerical results to prove the effectiveness of our approach using colored images and videos.

\section{Definition and Preliminaries}\label{Sec2}
Recall that in image denoising, the preservation of certain visible structure such as edges is essential. To achieve that goal, various methods were proposed like wavelet-based methods, stochastic methods, and variational methods, in particular, total variation regularization.

The Rudin-Osher-Fatemi (ROF) model for denoising  to obtain
a clean image $u$ from a noisy 
one $f$ proposes to  
solve the following optimization problem:
\begin{equation}\label{rofmodel}
\min_{u\in BV(\Omega)} \| u\|_{TV(\Omega)} + \frac{\lambda}{2} \|f-u\|^2_{L^2} \,dx,
\end{equation}
where $BV(\Omega )$ is the set of functions with bounded variation over the domain $\Omega$, $TV(\Omega )$ is the total variation over the domain, and $ \lambda$ is a penalty parameter.
The functional in \cref{rofmodel} is defined in the space of functions of bounded variation, so it does not necessarily require image functions to be continuous and smooth. That is why the denoised image   allows for jumps or discontinuities, i.e., edges,  and the visual quality of the 
result is superior to linear filter methods.

The ROF formulation do have some weaknesses like that it is not strictly convex, it is susceptible to backward diffusion, and it favors piecewise constant solutions, which may cause false edges (staircases) in the output image; see \cite{Mulet}. Also, in a naive numerical approach 
to minimize \eqref{rofmodel}, one has to add some small regularization parameter to calculate 
the derivative of $\| u\|_{TV(\Omega)}$  \cite{Chan,Vogel,Vese}, which can result in some loss in accuracy.

Furthermore, although the ROF formulation is efficient in preserving edges of uniform and small curvature, it may result in excessively smoothening of small scale features having more pronounced curvature edges as shown in \cite{Rao}. 
As Chan et al.~explained in \cite{Tony,Mulet,Val}, the TV approach may sometimes result in loss of contrast and geometry in the output images, even when the observed image is noise-free.

However, given the strength of TV-based techniques, especially in edge preservation, various approach have been proposed to solve the above mentioned issues, for instance, the 
total variation penalty method by Vogel in \cite{Vogel,Avog} or 
the  adaptive total variation based regularization model by 
Strong and Chan~\cite{Strong}.
Recently, many researchers have employed algorithms that use the
dual formulation of the ROF model; see \cite{Gol,Carter,Antonin}.

In the original formulation,  the isotropic TV seminorm 
$ \| u\|_{TV(\Omega)}$ is used,  
which essentially is the $L^1(\Omega)$-norm of 
the Euclidean norm of the gradient of a function (extended to nondifferentiable functions by a weak formulation).

However, the isotropic TV can sometimes round the sharp edges of an image such as at corners points. 
To overcome this, a modification of the ROF model \cref{rofmodel},
the anisotropic TV, 
was pointed out by Esedoglu and Osher \cite{Ani}.
Anisotropic TV helps avoid blur at the corners, and the difference 
to the isotropic case is the use of the $\ell^1$-norm of the gradient
in place of the Euclidean norm. 
 
  For numerical computations, only discretized versions of 
 the ROF-functionals are needed. 
 More precisely, after discretization, the image $f$ and 
 the denoised image $u$ in~\eqref{rofmodel} can be regarded 
 as matrices in $\mathbb{R}^{m\times n}$. 
  Let us denote the isotropic  TV by $TV_I$ and the  anisotropic TV by $TV_{L_1}$. 
 The discrete version of the TV-seminorms read as follows: 
  \begin{equation}
\begin{split}
&TV_I= \sum_{i=1}^{m-1}\sum_{j=1}^{n-1}\sqrt{(x_{i,j}-x_{i+1,j})^2
+(x_{i,j}-x_{i,j+1})^2}\\
\end{split}
\end{equation}
The anisotropic Total Variation seminorm on the other hand is defined as 
\begin{equation}
\begin{split}
TV_{L_1}(x)=  & \sum_{i=1}^{m-1}\sum_{j=1}^{n}\mid x_{i+1,j}-x_{i,j}\mid
+\sum_{i=1}^{m}\sum_{j=1}^{n-1}\mid x_{i,j+1}-x_{i,j}\mid\, .
\end{split}
\end{equation}
Our approach  works for either isotropic or anisotropic TV.

   One of the objectives of the current article is to extend 
 the ROF-formulation from the image case to the tensor case. 
 That is, we consider inputs $f$ and denoised elements $u$ 
 as being 3rd-order tensor, i.e., elements in 
 $\mathbb{R}^{m\times n \times o}$ (or more general, $n$-th order tensors). 
 These can be seen as discretizations of 3-variate functions 
 (e.g., gray-scale images depending on time or color images).


Let us briefly sketch our tensor notation. 
We denote a vector by a bold lower-case letter $\mathbf{a}$.
The bold upper-case letter ${A}$ represents a matrix,
and the symbol of a tensor is a calligraphic letter $\mathcal{A}$.
Throughout this paper, we focus primarily on third-order and fourth-order tensors,
$\mathcal{A}=(a_{ijk})\in\mathbb{R}^{I\times J\times K}$ and $\mathcal{A}=(a_{ijkl})\in\mathbb{R}^{I\times J\times K \times L}$, respectively,  of indices $1\leq i\leq I,1\leq j\leq J$ and $1\leq l\leq K$,
but all definitions and calculations are applicable to tensors of any order $n > 2$. In some calculations, tensor unfolding (a.k.a matricization) is required.

A third-order tensor $\mathcal{A}$ has column, row and tube fibers,
which are defined by fixing every index but one and
denoted by ${a}_{:jk}$, ${a}_{i:k}$ and ${a}_{ij:}$ respectively.
Correspondingly, we can obtain three kinds
of matricizations 
${A}_{(1)} \in \mathbb{R}^{I \times J \cdot K},{A}_{(2)} \in \mathbb{R}^{J \times I \cdot K}$, and ${A}_{(3)} \in \mathbb{R}^{K \times I \cdot J}$ 
%
of $\mathcal{A}$ according to respectively arranging the column, row, and tube fibers to be columns of matrices.



The inner product of $\mathcal{S} \in \mathbb{R}^{I \times J \times K}$ and $\mathcal{T} \in \mathbb{R}^{I \times J \times K}$ is given by \[\langle \mathcal{S}, \mathcal{T} \rangle = \sum_{ijk} s_{ijk} t_{ijk}.\] The inner product for third-order tensors is easily generalized to $n$th-order tensors; i.e., $\langle \mathcal{S}, \mathcal{T} \rangle = \sum_{i_1 i_2 \cdots i_n} s_{i_1 i_2 \cdots i_n} t_{i_1 i_2 \cdots i_n}$.

The contracted product can be viewed as a partial inner product in multiple indices. For example, given a third-order tensor
$\mathcal{S} \in \mathbb{R}^{I \times J \times K}$ and a second-order tensor $A \in \mathbb{R}^{J \times K}$, we define the contracted product as
\[\langle \mathcal{S}, {A} \rangle_{2,3;1,2} = \sum_{jk} s_{ijk} a_{jk},\]
where the modes 2, 3 and 1, 2 indicate the indices that are being contracted. In general, given any two tensors, 
$\mathcal{S} \in \mathbb{R}^{I_1 \times I_2 \cdots \times I_M}$ and $\mathcal{A} \in \mathbb{R}^{J_1 \times J_2 \cdots \times J_N}$, then the contracted product in the modes, $\bar{i}_1, \bar{i}_2, \cdots \bar{i}_l$, where $\bar{i}_l \in [1, \cdots, M]$ and $\bar{j}_1, \bar{j}_2, \cdots \bar{j}_l$, where $\bar{j}_l \in [1, \cdots, N]$  is defined as
\[\langle \mathcal{S}, \mathcal{A} \rangle_{\bar{i}_1, \bar{i}_2 \cdots \bar{i}_l;\bar{j}_1, \bar{j}_2 \cdots \bar{j}_l} = \sum_{k_1, \cdots , k_l} s_{i_1 i_2 k_1 i_3 k_2 \cdots i_{M-2} k_{l-1}  k_l i_M} a_{k_1 j_2 k_2 \cdots  j_{M-2}  k_{l-1} k_l},\]
where $k_m$ represents the index corresponding to the contracting mode pair $(\bar{i}_m,\bar{j}_m)$ given that the $i_m$ and $j_m$ indices are not contracting modes.


The Frobenius norm of a tensor is given by 
 \[\Vert \mathcal{S} \Vert_F =\langle \mathcal{S}, \mathcal{S} \rangle^{1/2},\]
where $\mathcal{S} \in \mathbb{R}^{I \times J \times K}$.

The element-wise multiplication and division of two tensors $\mathcal{X} \in \mathbb{R}^{I\times J\times K}$ and $\mathcal{Y} \in \mathbb{R}^{I\times J\times K}$ are defined as
\begin{equation*}
   \mathcal{X}\cdot*\mathcal{Y} = x_{ijk}y_{ijk}
\end{equation*}
and
\begin{equation*}
   \mathcal{X} \cdot / \mathcal{Y} = \frac{x_{ijk}}{y_{ijk}},
\end{equation*}
respectively.

\section{Minimization algorithms: ISTA, FISTA, and MFISTA}\label{sec:fista}
In this section, we describe the algorithms that 
are used to minimize the TV-regularization functional. 

Consider the following optimization problem:
\begin{equation}\label{prob}
    \min \{G(x)\equiv f(x)+g(x)\},
\end{equation}
where $f$ and $g$ are functionals with certain properties that are 
specified below.

\subsection{Iterative Shrinkage/Thresholding Algorithm (ISTA)}\label{ISTASEC}
ISTA (Iterative Shrinkage/Thresholding algorithm) is an iterative 
method for minimizing problems of the form \eqref{prob} in the 
case when $g$ is convex but not necessarily differentiable 
and $f$ is differentiable with  Lipschitz continuous gradient. 
It involves a combination of 
a usual gradient step for  $f$ and a proximal 
operator with respect to $g$.

This algorithm can be traced back to the proximal forward-backward iterative scheme introduced by Bruck in 1975 \cite{Bruck} and Passty in 1979 \cite{Passty} within the general framework of splitting methods.
Since the optimization problem \cref{prob} involves a minimization of a sum of a smooth term and a nonsmooth term, 
the ISTA model splits the minimization of $f$ and $g$, and it
iteratively calculates a minimizing sequence by 
\begin{eqnarray}\label{EqIsta}
{x}_{n+1}= \arg\min\limits_{{x}} \left\{ f({x}_n)+\langle{x}
-{x}_n,\nabla f({x}_n)\rangle
+\frac{L}{2}\|{x}-{x}_n\|^2+g({x}) \right\},
\end{eqnarray}
where $f$ and $g$ are the smooth and nonsmooth functions, respectively.

The iteration can be written as 
\[ {x}_{n+1} = p_L({x}_{n}), \qquad p_L({x}_{n}) 
=  (I +  \tfrac{1}{L}\partial g)^{-1}\left({x}_{n} - \tfrac{1}{L}\nabla f(\mathbf{x}_{n}\right). \]
Here, $\partial g$ is the subgradient of $g$. 
In \cref{prob} if $g(x) \equiv 0$,  then ISTA is reduced to a smooth gradient method.
ISTA has a worst-case convergence rate  of $O(1/k)$ \cite{BeckFast}. 
The algorithms is summarized in \cref{alg:ISTA}.
\begin{algorithm}[htb]
\renewcommand{\algorithmicrequire}{\textbf{Input:}}
\caption{ISTA}
\label{alg:ISTA}
\begin{algorithmic}[1]
\REQUIRE $L \coloneqq L(f)$, where L is a Lipschitz constant of $\nabla(f)$.\\
\textbf{Step 0.} Take $x_0 \in \mathbb{R}^n$\\
\textbf{Step n.} ($n\geq 1$) Compute
\begin{equation*}
x_n = p_L\left(x_{n-1}\right)
\end{equation*}
\end{algorithmic}
\end{algorithm}

\subsection{Fast Iterative Shrinkage/Thresholding Algorithm (FISTA)}\label{ssec:fista}

Nesterov in \cite{Nest} showed that  for  smooth optimzation problems, there exists a gradient method with an $O(1/k^2)$ convergence rate, which is a substantial improvement compared to the rate for, e.g., ISTA. 
The method of Nesterov (Nesterov's acceleration)  only requires  one gradient evaluation at each iteration and  involves an additional recombination step with the previous iterate, which is easy to compute. FISTA is an extention of  Nesterov's acceleration  \cite{Nest} to the problem in \cref{prob}; the method is described  in \cref{alg:FISTA}.





\begin{algorithm}[htb]
\renewcommand{\algorithmicrequire}{\textbf{Input:}}
\caption{FISTA}
\label{alg:FISTA}
\begin{algorithmic}[1]
\REQUIRE An upper bound $L \geq L(f)$ on the Lipschitz constant L(f) of $\nabla(f)$.\\
\textbf{Step 0.} Take $y_1=x_0 \in \mathbb{R}^n$, $t_1=1$\\
\textbf{Step n.} ($n\geq 1$) Compute
\begin{equation}\label{fista1}
x_n = p_L\left(y_n\right)
\end{equation}
\vspace*{-12pt}
\begin{equation}\label{fista2}
t_{n+1} = \frac{1+\sqrt{1+4t_{n}^2}}{2}
\end{equation}
\vspace*{-12pt}
\begin{equation}\label{fista3}
y_{n+1} = x_n + \left( \frac{t_n-1}{t_{n+1}} \right) \left( x_{n}-x_{n-1} \right)
\end{equation}
\end{algorithmic}
\end{algorithm}

The main difference between the FISTA and ISTA is that the 
operator  $p_L(\cdot)$ is not employed at the previous point $x_{k-1}$ but rather at the point $y_k$ that is a linear combination of the previous two iterates \{$x_{k-1}, x_{k-2}$\}. Each iterate of FISTA depends on the previous two iterates and not only on the last iterate as in ISTA.
Note that FISTA is as simple as ISTA. They both share the same computational demand; the computation of the remaining additional steps is computationally negligible. Convergence of the FISTA algorithm was shown in \cite{BeckFast};  we state the main convergence result for FISTA in \cref{fistathm}.
\begin{thm}[\cite{BeckFast}, Theorem 4.1]\label{fistathm}
Let $\{x_k\}$ be generated by FISTA. Then for any $k\geq 1$
\begin{equation*}
    F(x_k)-F(x^*) \leq \frac{2L\|x_0-x^*\|^2}{(k+1)^2}, \qquad \forall x^*\in X^*.
\end{equation*}
\end{thm}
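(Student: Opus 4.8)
The plan is to follow the potential-function (Lyapunov) argument of Beck and Teboulle, built around a single pivotal descent inequality for the proximal-gradient map $p_L$. First I would establish that basic estimate: writing $Q_L(x,y) = f(y) + \langle x-y,\nabla f(y)\rangle + \frac{L}{2}\|x-y\|^2 + g(x)$, whenever $L$ dominates the Lipschitz constant so that $F(p_L(y)) \le Q_L(p_L(y),y)$, one has for every $x$
\[ F(x) - F(p_L(y)) \ge \frac{L}{2}\|p_L(y)-y\|^2 + L\langle y - x,\, p_L(y)-y\rangle. \]
This comes from the convexity of $f$ and $g$ together with the subgradient optimality condition characterizing the minimizer $p_L(y)$, and it is the only place where the structure of $f$ and $g$ (smoothness plus convexity) actually enters.

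Next I would apply this inequality twice at the extrapolated point $y = y_{n+1}$, recalling from \cref{fista1} that $x_{n+1} = p_L(y_{n+1})$: once with $x = x_n$ and once with $x = x^*$. Abbreviating $v_n = F(x_n) - F(x^*)$, I would form the combination that multiplies the $x=x_n$ inequality by $t_{n+1}-1$, adds the $x=x^*$ inequality, and scales by $t_{n+1}$. Invoking the defining identity $t_{n+1}^2 - t_{n+1} = t_n^2$ implied by the update \cref{fista2}, together with the extrapolation rule \cref{fista3} in the form $t_{n+1}y_{n+1} = t_{n+1}x_n + (t_n-1)(x_n-x_{n-1})$, the cross terms reorganize into a difference of squared norms. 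Setting $u_n = t_n x_n - (t_n-1)x_{n-1} - x^*$, this produces
\[ \frac{2}{L}\,t_n^2 v_n - \frac{2}{L}\,t_{n+1}^2 v_{n+1} \ge \|u_{n+1}\|^2 - \|u_n\|^2, \]
so that the potential $\frac{2}{L}t_n^2 v_n + \|u_n\|^2$ is nonincreasing in $n$.

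From here the argument telescopes. Monotonicity gives $\frac{2}{L}t_k^2 v_k \le \frac{2}{L}t_1^2 v_1 + \|u_1\|^2$. Since $t_1 = 1$ we have $u_1 = x_1 - x^*$, and one more application of the pivotal inequality at $y_1 = x_0$ with $x = x^*$ yields $\frac{2}{L}v_1 + \|x_1 - x^*\|^2 \le \|x_0-x^*\|^2$, collapsing the right-hand side to $\|x_0-x^*\|^2$. Hence $v_k \le \frac{L\|x_0-x^*\|^2}{2 t_k^2}$. The proof then closes with an elementary induction showing $t_k \ge (k+1)/2$: the base case is $t_1 = 1$, and if $t_k \ge (k+1)/2$ then $\sqrt{1+4t_k^2} \ge k+1$, so $t_{k+1} = \frac{1+\sqrt{1+4t_k^2}}{2} \ge \frac{k+2}{2}$. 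Substituting $t_k^2 \ge (k+1)^2/4$ gives exactly $F(x_k)-F(x^*) \le \frac{2L\|x_0-x^*\|^2}{(k+1)^2}$.

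The hard part will be the middle step: choosing the precise weights $t_{n+1}-1$ and $t_{n+1}$ and recognizing that, after simultaneously using $t_{n+1}^2 - t_{n+1} = t_n^2$ and the extrapolation formula to eliminate $y_{n+1}$, the leftover cross terms complete the square into $\|u_{n+1}\|^2 - \|u_n\|^2$. Getting this bookkeeping exactly right so that the telescoping potential emerges is where all the cleverness resides; by contrast the descent lemma, the telescoping, and the growth estimate on $t_k$ are routine once that identification is made.
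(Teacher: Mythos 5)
Your sketch is correct and is precisely the argument of the cited source: the paper itself gives no proof of \cref{fistathm}, quoting it verbatim from Beck and Teboulle (\cite{BeckFast}, Theorem~4.1), and your potential-function derivation --- the descent inequality for $p_L$, the weighted combination using $t_{n+1}^2-t_{n+1}=t_n^2$ to obtain the nonincreasing quantity $\tfrac{2}{L}t_n^2v_n+\|u_n\|^2$, the collapse of the initial term via the step at $y_1=x_0$, and the bound $t_k\ge (k+1)/2$ --- reproduces Lemmas~2.3, 4.1--4.3 and the proof of Theorem~4.1 of that reference faithfully. No gaps.
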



 

Contrary to ISTA,  
FISTA is not a monotone algorithm, i.e., the 
functional values are not necessarily monotonically decreasing. 
Although it is not required for convergence, 
monotonicity is a desirable property.
Especially when an inexact version of FISTA is used, 
e.g., when the proximal mapping is only available 
approximately, monotonicity makes  the 
alogorithms become more robust. 
This issue brought the monotone version of FISTA, called MFISTA, which is described in \cref{alg:MFISTA}. The convergence result for MFISTA remain the same as for FISTA:
\begin{thm}[\cite{TeboulleBeck}, Theorem 5.1]\label{mfistathm}
Let $\{x_k\}$ be generated by FISTA. Then for any $k\geq 1$
\begin{equation*}
    F(x_k)-F(x^*) \leq \frac{2L\|x_0-x^*\|^2}{(k+1)^2}, \qquad \forall x^*\in X^*.
\end{equation*}
 
\end{thm}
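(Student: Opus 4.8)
The plan is to reproduce the convergence analysis that establishes \cref{fistathm} for FISTA, inserting the single extra ingredient that characterises its monotone variant (\cref{alg:MFISTA}). Writing $z_k = p_L(y_k)$ for the prox-gradient point and letting $x_k$ be whichever of $z_k$ and $x_{k-1}$ has the smaller value of $F$, the monotone selection furnishes the two elementary facts $F(x_k)\le F(z_k)$ and $F(x_k)\le F(x_{k-1})$. These are precisely what will let me transfer every estimate proved for the auxiliary point $z_k$ to the actual iterate $x_k$. Throughout I abbreviate $v_k = F(x_k)-F(x^*)$ and $b_k = F(z_k)-F(x^*)$, so that $v_k\le b_k$ and $v_k\le v_{k-1}$, and I set $u_k = t_k z_k-(t_k-1)x_{k-1}-x^*$.

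The backbone of the argument is the fundamental prox-descent inequality: for $z=p_L(y)$ and any $x$,
\[
  \tfrac{2}{L}\bigl(F(x)-F(z)\bigr)\ \ge\ \|z-x\|^2-\|y-x\|^2,
\]
which follows from the $L$-Lipschitz continuity of $\nabla f$ together with the convexity of $g$ and the optimality characterisation of $p_L$. First I would apply this at a generic step with $(z,y)=(z_k,y_k)$ twice, once with $x=x^*$ and once with $x=x_{k-1}$, producing two inequalities that bound $b_k$. Forming the combination with weights $1$ and $t_k-1$, multiplying through by $t_k$, and invoking the identity $t_k(t_k-1)=t_{k-1}^2$ forced by the update \cref{fista2}, the left-hand side collapses to $\tfrac{2}{L}\bigl(t_{k-1}^2 v_{k-1}-t_k^2 b_k\bigr)$. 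The main obstacle is the algebraic reorganisation of the right-hand side into the telescoping form $\|u_k\|^2-\|u_{k-1}\|^2$: this is a completion-of-squares/Pythagorean identity relating the three points $z_k$, $x_{k-1}$, $x^*$ to the extrapolated point $y_k$, and it is exactly where the MFISTA form of the recombination rule \cref{fista3} is used, the coefficients being chosen so that the $y_k$-terms fuse into $\|u_{k-1}\|^2$. Once this is in place, the monotonicity $v_k\le b_k$ lets me replace $b_k$ by $v_k$, yielding
\[
  \tfrac{2}{L}t_{k-1}^2 v_{k-1}+\|u_{k-1}\|^2\ \ge\ \tfrac{2}{L}t_k^2 v_k+\|u_k\|^2,
\]
so the quantity $Q_k:=\tfrac{2}{L}t_k^2 v_k+\|u_k\|^2$ is nonincreasing in $k$.

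To conclude I would bound $Q_k$ by its initial value. Since $t_1=1$ gives $u_1=z_1-x^*$, a single application of the descent inequality at the first step (with $y_1=x_0$) together with $v_1\le b_1$ yields $\tfrac{2}{L}v_1+\|u_1\|^2\le\|x_0-x^*\|^2$, i.e. $Q_1\le\|x_0-x^*\|^2$. Hence $\tfrac{2}{L}t_k^2 v_k\le Q_k\le\|x_0-x^*\|^2$ for every $k$. A short induction on \cref{fista2} shows $t_k\ge(k+1)/2$ (the base case $t_1=1$ holds, and $\sqrt{1+4t_k^2}\ge 2t_k$ propagates the bound), and substituting $t_k^2\ge(k+1)^2/4$ gives
\[
  F(x_k)-F(x^*)\ \le\ \frac{L\,\|x_0-x^*\|^2}{2t_k^2}\ \le\ \frac{2L\,\|x_0-x^*\|^2}{(k+1)^2},
\]
which is the claim. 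The genuinely delicate step is the weighted combination and the quadratic identity of the second paragraph; the monotone selection of MFISTA enters only through the trivial replacements $v_k\le b_k$, which is the reason the rate is identical to that of FISTA in \cref{fistathm}.
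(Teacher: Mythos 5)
The paper offers no proof of this statement: it is quoted verbatim from Beck and Teboulle (Theorem 5.1 of \cite{TeboulleBeck}), so there is nothing in the text to compare your argument against line by line. Your sketch correctly reconstructs the standard argument from that reference: the prox-descent inequality applied at $x^*$ and at $x_{k-1}$, the weighted combination using $t_k(t_k-1)=t_{k-1}^2$, the completion-of-squares that turns the right-hand side into $\|u_k\|^2-\|u_{k-1}\|^2$ (where the MFISTA recombination rule is exactly what makes $t_ky_k-(t_k-1)x_{k-1}-x^*$ equal to $u_{k-1}$), the monotone selection entering only through $F(x_k)\le F(p_L(y_k))$, and the bound $t_k\ge (k+1)/2$. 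Two cosmetic points: the theorem as printed says ``generated by FISTA,'' which is evidently a typo for MFISTA, and your reading of \cref{alg:MFISTA} (with $z_k=p_L(y_k)$ and $x_k$ the better of $z_k$ and $x_{k-1}$) is the intended correction of the algorithm's own notational slip; both are handled appropriately in your proposal.
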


\begin{algorithm}[htb]
\renewcommand{\algorithmicrequire}{\textbf{Input:}}
\caption{MFISTA \cite{TeboulleBeck}}
\label{alg:MFISTA}
\begin{algorithmic}[1]
\REQUIRE $L \geq L(f)$, where L is an upper bound on the Lipschitz constant of $\nabla(f)$.\\
\textbf{Step 0.} Take $y_1=x_0 \in \mathbb{R}^n$, $t_1=1$\\
\textbf{Step n.} ($n\geq 1$) Compute
\begin{equation*}
x_n = p_L\left(y_n\right)
\end{equation*}
\vspace*{-12pt}
\begin{equation*}
t_{n+1} = \frac{1+\sqrt{1+4t_{n}^2}}{2}
\end{equation*}
\vspace*{-12pt}
\begin{equation*}
x_n = \argmin \{F(x): x=z_n,x_{n-1}\}
\end{equation*}
\vspace*{-12pt}
\begin{equation*}
y_{n+1} = x_n + \left( \frac{t_n}{t_{n+1}} \right)(z_n-x_n)+ \left( \frac{t_n-1}{t_{n+1}} \right) \left( x_{n}-x_{n-1} \right)
\end{equation*}
\end{algorithmic}
\end{algorithm}

Employing FISTA for the ROF problem is at first sight an obvious
task 
by setting $g(x) = TV(x)$ and $f(x) = \frac{\lambda}{2} \|x-f\|^2$.
However, this leads to the difficulty of calculating the 
proximal mapping $(I + \partial g)^{-1}$ 
for $g$, which is not at all easy to do. 
Instead, it was proposed in \cite{TeboulleBeck} to apply FISTA
to the dual ROF problem. The dual problem has been identified and proposed
by Chambolle  \cite{Antonin, Cham} and essentially reads as 
\[ \min_p \| f - \mbox{div}\, p\|^2: \quad \text{s.t.} \  \|p\|_{L^\infty} \leq 1.  \] 
Thus, we may use for $g(x)$ in the FISTA scheme, 
the indicator function of 
$\|p\|_{L^\infty} \leq 1$.
The corresponding proximal mapping can be calculated directly and turns out to be a simple projection  operator. 
The analogous discrete versions 
are obtained by discretizing $\mbox{div}$ and all involved norms. 
More details on the implementation are described in the 
next sections for the tensor case.

\section{Tensor Denoising}\label{DenoiseSec}

In this section we extend the classical ROF functional to 
the tensor case. Instead of images, we consider higher-dimensional 
objects of interest, such as videos or color images. 
Then these high-dimensional objects can be described mathematically as multivariate functions.
After discretization, these  objects can be
modelled as higher-order tensors.

In analogy with the ROF-function,  let us consider the following discretized TV-based denoising problem for tensors: 

\begin{eqnarray}\label{denoise}
\min_{\mathcal{T}\in C} \left \Vert \mathcal{T} - \mathcal{S} \right \Vert^2_F + 2\lambda TV(\mathcal{T}),
\end{eqnarray}
where $\mathcal{T}$ is the desired unknown color image or video to be recovered, $\mathcal{S}$ is the observation (the noisy data), C is a closed convex set subset of  $\mathbb{R}^{m\times n\times o}$, 
 which models various constraints, 
and $\lambda$ is the regularization parameter (fidelity parameter) that provides a tradeoff between fidelity to measurement and noise sensitivity. Also, TV is the discrete total variation norm which can be the isotropic or anisotropic TV
defined as follows:
In the isotropic case, 
 \begin{equation}
\begin{split}
&TV_I= \\
&= \sum_{i=1}^{m-1}\sum_{j=1}^{n-1}\sum_{k=1}^{o-1}\sqrt{(x_{i,j,k}-x_{i+1,j,k})^2+(x_{i,j,k}-x_{i,j+1,k})^2+(x_{i,j,k}-x_{i,j,k+1})^2}.
\end{split}
\end{equation}
The anisotropic Total Variation seminorm on the other hand is defined as
\begin{equation}\label{TVani}
\begin{split}
TV_{L_1}(x)=  & \sum_{i=1}^{m-1}\sum_{j=1}^{n}\sum_{k=1}^{o}\mid x_{i+1,j,k}-x_{i,j,k}\mid+\sum_{i=1}^{m}\sum_{k=1}^{o}\sum_{j=1}^{n-1}\mid x_{i,j+1,k}-x_{i,j,k}\mid\\
& +\sum_{i=1}^{m}\sum_{k=1}^{o-1}\sum_{j=1}^{n}\mid x_{i,j,k+1}-x_{i,j,k}\mid.
\end{split}
\end{equation}
The norm $\| \cdot \|_F$ in \eqref{denoise} is the Frobenius norm for 
tensors. It is defined as $\| T  \|_F= \sum_{i,j,k} |t_{ijk}|^2$.

We may employ the algorithms of the previous section to 
the tensor problem~\eqref{denoise}. As stated before, a 
direct application of the (F)ISTA algorithms require 
the calculation of the proximal mapping, which is a nontrivial
task, so we will take a more tractable approach by using the dual formulation of Chambolle~\cite{Antonin}, 
which we describe in more detail in the following. 
This method that we will present below works for both isotropic and anisotropic TV.

Let us define the set $\mathbb{P}$ as  the tensor triplets $(\mathcal{P}, \mathcal{Q}, \mathcal{R})$, where $\mathcal{P}\in \mathbb{R}^{(m-1)\times n\times o}$, $\mathcal{Q}\in \mathbb{R}^{(m)\times (n-1)\times o}$ and $\mathcal{R}\in \mathbb{R}^{(m)\times n\times (o-1)}$ that satisfy the following conditions in case of anisotropic TV:
\begin{alignat*}{2}
\mid p_{i,j,k}\mid &\leq 1,&\qquad &i=1,\ldots,m-1, j=1,\ldots,n, k=1,\ldots,o\\
\mid q_{i,j,k}\mid &\leq 1,& &i=1,\ldots,m, j=1,\ldots,n-1, k=1,\ldots,o\\
\mid r_{i,j,k}\mid &\leq 1,&  &i=1,\ldots,m, j=1,\ldots,n, k=1,\ldots,o-1.
\end{alignat*} 
 For isotropic TV on the other hand the set $\mathbb{P}$ must satisfy the following:
 \begin{alignat*}{2} 
p_{i,j,k}^2+q_{i,j,k}^2+r_{i,j,k}^2&\leq 1,& \qquad &i=1,\ldots,m-1, j=1,\ldots,n-1, k=1,\ldots,o-1\\
\mid p_{i,n,o}\mid &\leq 1,& &i=1,\ldots,m-1\\ 
\mid q_{m,j,o}\mid &\leq 1,& &j=1,\ldots,n-1\\
\mid r_{m,n,k}\mid &\leq 1,& &k=1,\ldots,o-1.
\end{alignat*} 
Furthermore, let 
$\mathcal{L}:\mathbb{R}^{(m-1)\times n\times o}\times \mathbb{R}^{m\times (n-1)\times o}\times \mathbb{R}^{m\times n\times (o-1)}\rightarrow \mathbb{R}^{m\times n\times o}$ be a linear operator 
(essentially the discrete $\text{div}$-operator)
defined by:
$$\mathcal{L}(\mathcal{P}, \mathcal{Q}, \mathcal{R})_{i,j,k}=p_{i,j,k}+q_{i,j,k}+r_{i,j,k}-p_{i-1,j,k}-q_{i,j-1,k}-r_{i,j,k-1},$$ 
where $i=1,\ldots,m, j=1,\ldots,n, k=1,\ldots,o$. 
Here we assume zero at the boundaries, i.e., $$p_{0,j,k}=p_{m,j,k}=q_{i,0,k}=q_{i,n,k}=r_{i,j,0}=r_{i,j,o}=0.$$
The transpose of $\mathcal{L}$ denoted as  $$\mathcal{L}^T:\mathbb{R}^{m\times n\times o}\rightarrow \mathbb{R}^{(m-1)\times n\times o}\times \mathbb{R}^{m\times (n-1)\times o}\times \mathbb{R}^{m\times n\times (o-1)}$$ is given by
$\mathcal{L}^T(\mathcal{T})=(\mathcal{P}, \mathcal{Q}, \mathcal{R}),$
where the tensors $\mathcal{P}$, $\mathcal{Q}$ and $\mathcal{R}$ are defined as:
 \begin{alignat*}{2}  p_{i,j,k}&=t_{i,j,k}-t_{i+1,j,k}& \qquad 
 &\text{for } i=1,\ldots,m-1, j=1,\ldots,n, k=1,\ldots,K,\\ q_{i,j,k}&=t_{i,j,k}-t_{i,j+1,k}&  &\text{for } i=1,\ldots,m, j=1,\ldots,n-1, k=1,\ldots,K\\ 
 r_{i,j,k}&=t_{i,j,k}-t_{i,j,k+1}&  &\text{for } i=1,\ldots,m, j=1,\ldots,n, k=1,\ldots,K-1.
 \end{alignat*}

Now let us state the following proposition which is an extension to the N-di\-men\-si\-o\-nal case of the analogous one  in \cite{Chambolle, TeboulleBeck}. For simplicity we restrict ourselves to third-order tensors but the generalization to any $n$th-order tensor is straightforward. 
\begin{prop}
Let a tensor triplet $(\mathcal{P}, \mathcal{Q}, \mathcal{R})$ be the optimal solution of the optimization problem
\begin{align}\label{prop}
&\min_{(\mathcal{P},\mathcal{Q},\mathcal{R})} d(\mathcal{P},\mathcal{Q},\mathcal{R}) 
\end{align}
where $ d(\mathcal{P},\mathcal{Q},\mathcal{R}):= -\parallel\mathcal{H}_C(\textbf{S}-\lambda\mathcal{L}(\mathcal{P},\mathcal{Q},\mathcal{R})\parallel_F^2 + \parallel\textbf{S}-\lambda\mathcal{L}(\mathcal{P},\mathcal{Q},\mathcal{R})\parallel_F^2$ and $\mathcal{H}_C(x)=x-P_C(x)$ for every $x\in\mathbb{R}^{m\times n\times o}$. Then the optimal solution of our TV-based denoising problem (\ref{denoise}) is given by $x=P_C(\mathcal{S}-\lambda\mathcal{L}(\mathcal{P},\mathcal{Q},\mathcal{R})),$ where $P_C $ is the orthogonal projection operator onto the convex set $C$.
 \end{prop}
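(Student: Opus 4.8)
The plan is to derive the stated duality by the standard minimax route of Chambolle \cite{Chambolle, TeboulleBeck}, carried over to the tensor setting. The first step is to establish a dual (variational) representation of the discrete total variation. Since $\mathcal{L}^T(\mathcal{T})$ collects exactly the finite differences that appear in $TV(\mathcal{T})$, and since the anisotropic seminorm (a sum of absolute values) and the isotropic seminorm (a sum of Euclidean norms of the difference triplets) are both support functions of the corresponding unit balls, the $\ell^1$--$\ell^\infty$ duality (respectively the self-duality of the Euclidean norm) yields
$$TV(\mathcal{T}) = \max_{(\mathcal{P},\mathcal{Q},\mathcal{R})\in\mathbb{P}} \langle (\mathcal{P},\mathcal{Q},\mathcal{R}), \mathcal{L}^T(\mathcal{T})\rangle = \max_{(\mathcal{P},\mathcal{Q},\mathcal{R})\in\mathbb{P}} \langle \mathcal{L}(\mathcal{P},\mathcal{Q},\mathcal{R}), \mathcal{T}\rangle,$$
where $\mathbb{P}$ is precisely the anisotropic or the isotropic constraint set defined above, and the second equality is the adjoint relation defining $\mathcal{L}^T$. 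This representation holds verbatim for both TV variants, which is why the remaining argument is uniform in the choice of seminorm.

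Inserting this representation into the denoising functional \eqref{denoise} recasts the primal problem as the saddle-point problem $\min_{\mathcal{T}\in C}\max_{(\mathcal{P},\mathcal{Q},\mathcal{R})\in\mathbb{P}} \Phi$, where $\Phi(\mathcal{T},\mathcal{P},\mathcal{Q},\mathcal{R}) = \|\mathcal{T}-\mathcal{S}\|_F^2 + 2\lambda\langle \mathcal{L}(\mathcal{P},\mathcal{Q},\mathcal{R}),\mathcal{T}\rangle$. The core step is to interchange the minimum and the maximum. This is legitimate because $\Phi$ is strictly convex (a coercive quadratic) in $\mathcal{T}$ over the closed convex set $C$ and concave (indeed affine) in $(\mathcal{P},\mathcal{Q},\mathcal{R})$ over the compact convex set $\mathbb{P}$, so Sion's minimax theorem applies and a saddle point exists.

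After the swap I would solve the inner minimization in closed form. Writing $\mathcal{D} = \mathcal{L}(\mathcal{P},\mathcal{Q},\mathcal{R})$ and completing the square,
$$\|\mathcal{T}-\mathcal{S}\|_F^2 + 2\lambda\langle \mathcal{D},\mathcal{T}\rangle = \|\mathcal{T}-(\mathcal{S}-\lambda\mathcal{D})\|_F^2 - \|\mathcal{S}-\lambda\mathcal{D}\|_F^2 + \|\mathcal{S}\|_F^2,$$
so the minimizer over $\mathcal{T}\in C$ is the orthogonal projection $\mathcal{T}^\star = P_C(\mathcal{S}-\lambda\mathcal{D})$, which is exactly the claimed reconstruction formula. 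Substituting back and using $\|P_C(x)-x\|_F^2 = \|x-P_C(x)\|_F^2 = \|\mathcal{H}_C(x)\|_F^2$, the optimal inner value equals $\|\mathcal{H}_C(\mathcal{S}-\lambda\mathcal{D})\|_F^2 - \|\mathcal{S}-\lambda\mathcal{D}\|_F^2 + \|\mathcal{S}\|_F^2$. Discarding the constant $\|\mathcal{S}\|_F^2$, maximizing this expression over $\mathbb{P}$ is identical to minimizing $d(\mathcal{P},\mathcal{Q},\mathcal{R})$, which identifies the dual problem \eqref{prop}.

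I expect the main obstacle to be the rigorous justification of the minimax interchange and the recovery of the primal minimizer from the saddle point, rather than any of the algebra. Concretely, once Sion's theorem and the compactness of $\mathbb{P}$ guarantee that the common saddle value equals both the primal and the dual optimum, one must argue that for a dual maximizer $(\mathcal{P}^\star,\mathcal{Q}^\star,\mathcal{R}^\star)$ the pair $\bigl(P_C(\mathcal{S}-\lambda\mathcal{D}^\star),(\mathcal{P}^\star,\mathcal{Q}^\star,\mathcal{R}^\star)\bigr)$ is genuinely a saddle point, so that $\mathcal{T}^\star = P_C(\mathcal{S}-\lambda\mathcal{L}(\mathcal{P}^\star,\mathcal{Q}^\star,\mathcal{R}^\star))$ attains the primal minimum. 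With the coercivity in $\mathcal{T}$ and the compactness of $\mathbb{P}$ ensuring attainment on both sides, the remaining computations reduce to the completion of the square and the projection identity already displayed.
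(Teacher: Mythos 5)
Your proposal follows essentially the same route as the paper's own proof: the dual (support-function) representation of the TV seminorm via $\mathcal{L}$, the min--max interchange (the paper cites the minimax theorem of Ekeland--Temam where you invoke Sion's theorem with compactness of $\mathbb{P}$), completion of the square to identify the inner minimizer as $P_C(\mathcal{S}-\lambda\mathcal{L}(\mathcal{P},\mathcal{Q},\mathcal{R}))$, and back-substitution to obtain the dual objective $d$. Your version is somewhat more careful about justifying the interchange and the recovery of the primal minimizer from the dual optimum, but the argument is the same.
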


\begin{proof}
Here we will derive an equivalent dual formulation to problem (\ref{denoise}) in the form of (\ref{prop}). 
Note that $|x|= \max_p\{ px: \mid p\mid\leq1\}$. Thus, we can rewrite the anisotropic total variation \cref{TVani} as $$TV_{L_1}(x)= \max_{(\mathcal{P},\mathcal{Q},\mathcal{R})\in\mathbb{P}} X(\mathcal{T}, \mathcal{P}, \mathcal{Q}, \mathcal{R}),$$ where 
\begin{equation*}
\begin{aligned}
    X(\mathcal{T}, \mathcal{P}, \mathcal{Q}, \mathcal{R})={} &\sum_{j=1}^n\sum_{k=1}^K\sum_{i=1}^{m-1}p_{i,j,k}(t_{i+1,j,k}-t_{i,j,k})\\
    &+\sum_{i=1}^m\sum_{k=1}^K\sum_{j=1}^{n-1}q_{i,j,k}(t_{i,j+1,k}-t_{i,j,k})\\
    & +\sum_{i=1}^m\sum_{j=1}^n\sum_{k=1}^{K-1}r_{i,j,k}(t_{i,j,k+1}-t_{i,j,k}).
    \end{aligned}
\end{equation*}
Recall that the inner product of two same-sized tensor is given by \[\langle \mathcal{S}, \mathcal{T} \rangle = \sum_{i_1\hdots i_N} s_{i_1 \hdots  i_N} t_{i_1 \hdots i_N},\] for two tensors 
$\mathcal{S}, \mathcal{T} \in \mathbb{R}^{I \times J \times K}$.
Hence, the anisotropic TV is equivalent to $$TV_{L_1}(x)= \max_{(\mathcal{P},\mathcal{Q},\mathcal{R}) \in \mathbb{P}} \langle \mathcal{T},\mathcal{L}(\mathcal{P}, \mathcal{Q}, \mathcal{R})\rangle.$$ 
Therefore, the denoising optimization model (\ref{denoise}) become a min-max optimization problem: $$\min_{\mathcal{T}\in C} \max_{(\mathcal{P},\mathcal{Q},\mathcal{R}) \in \mathbb{P}} \left \Vert \mathcal{T} - \mathcal{S} \right \Vert_F^2 + 2\lambda  \langle\mathcal{T},\mathcal{L}(\mathcal{P}, \mathcal{Q}, \mathcal{R})\rangle$$
The min-max theorem (see chapter IV in \cite{Eke}) then allows us to interchange the min and max to get the following:
\begin{eqnarray}\label{minmax}
\max_{(\mathcal{P},\mathcal{Q},\mathcal{R}) \in \mathbb{P}}\min_{\mathcal{T}\in C} \{\left \Vert \mathcal{T} - \mathcal{S} \right \Vert_F^2 + 2\lambda  \langle\mathcal{T},\mathcal{L}(\mathcal{P}, \mathcal{Q}, \mathcal{R})\rangle\}.
\end{eqnarray}
 Therefore, using elementary properties of inner products  and the definition of the Frobenius norm, the min-max problem (\ref{minmax}) becomes:
$$ \max_{(\mathcal{P}, \mathcal{Q}, \mathcal{R}) \in \mathbb{P} }\min_{\mathcal{T}\in C} \{\left \Vert \mathcal{T} - (\mathcal{S}-\lambda \mathcal{L}(\mathcal{P}, \mathcal{Q}, \mathcal{R}) \right \Vert_F^2 - \parallel \mathcal{S}-\lambda \mathcal{L}(\mathcal{P}, \mathcal{Q}, \mathcal{R})\parallel_F^2 + \parallel \mathcal{S}\parallel_F^2\}$$
Then the solution to  the inner optimization problem is 
 $\mathcal{T}= P_C(\mathcal{S}-\lambda \mathcal{L}(\mathcal{P}, \mathcal{Q}, \mathcal{R}))$. Plugging back this solution to the dual problem and ignoring the constant terms we get 
 $$ \max_{(\mathcal{P}, \mathcal{Q}, \mathcal{R}) \in \mathbb{P}} \{\left \Vert P_C(\mathcal{S}-\lambda \mathcal{L}(\mathcal{P}, \mathcal{Q}, \mathcal{R})) - (\mathcal{S}-\lambda \mathcal{L}(\mathcal{P}, \mathcal{Q}, \mathcal{R}) \right \Vert_F^2 - \parallel \mathcal{S}-\lambda \mathcal{L}(\mathcal{P}, \mathcal{Q}, \mathcal{R})\parallel_F^2$$
The proposition then follows.
\end{proof}

Note that a similar proof applies to the isotropic Total Variation semi-norm case since  $\sqrt{x^2+y^2}=\max_{p_1,p_2}\{p_1x+p_2x:p_1^2+p_2^2\leq\ 1\}$.

The dual problem (\ref{prop}) in the proposition can be solved using gradient type methods since the objective function is continuously differentiable.
 The gradient is then given by 
 \begin{equation}\label{grad}
 \nabla d(\mathcal{P}, \mathcal{Q}, \mathcal{R})=-2\lambda\mathcal{L}^T P_C(\mathcal{S}-\lambda\mathcal{L}(\mathcal{P}, \mathcal{Q}, \mathcal{R})).
 \end{equation}
 Recall that our goal is to combine FISTA (see \cref{sec:fista}) and the gradient projection method discussed above to solve our denoising problem. From the fundamental property for a smooth function in the class $C^{1,1}$, FISTA require an upper bound for the Lipschitz constant \cite{TeboulleBeck} of the gradient objective function of (\ref{prop}).
 
\begin{lem}
Let $L(d)$ be the Lipschitz constant of the gradient of the objective function $d$  given in \eqref{prop}. Then $$L(d)\leq 24\lambda^2.$$
\end{lem}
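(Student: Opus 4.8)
The plan is to reduce the whole estimate to a bound on the operator norm of the discrete divergence $\mathcal{L}$, and then to compute that operator norm by elementary summation inequalities. The two ingredients I would invoke are standard: the metric projection $P_C$ onto a closed convex set is nonexpansive (1-Lipschitz in the Frobenius norm), and a linear operator shares the operator norm of its transpose, $\|\mathcal{L}^T\|=\|\mathcal{L}\|$. Throughout I take the natural product norm $\|(\mathcal{P},\mathcal{Q},\mathcal{R})\|_F^2=\|\mathcal{P}\|_F^2+\|\mathcal{Q}\|_F^2+\|\mathcal{R}\|_F^2$ on the domain of $d$.

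First I would use the closed form \eqref{grad} of the gradient. For two triplets $w_1=(\mathcal{P}_1,\mathcal{Q}_1,\mathcal{R}_1)$ and $w_2=(\mathcal{P}_2,\mathcal{Q}_2,\mathcal{R}_2)$, write
\begin{equation*}
\|\nabla d(w_1)-\nabla d(w_2)\|_F = 2\lambda\,\bigl\|\mathcal{L}^T\bigl(P_C(\mathcal{S}-\lambda\mathcal{L}w_1)-P_C(\mathcal{S}-\lambda\mathcal{L}w_2)\bigr)\bigr\|_F.
\end{equation*}
Bounding $\mathcal{L}^T$ by its norm, applying the nonexpansiveness of $P_C$, and then using linearity of $\mathcal{L}$ gives
\begin{equation*}
\|\nabla d(w_1)-\nabla d(w_2)\|_F \le 2\lambda\|\mathcal{L}\|\cdot\lambda\|\mathcal{L}\|\,\|w_1-w_2\|_F = 2\lambda^2\|\mathcal{L}\|^2\,\|w_1-w_2\|_F,
\end{equation*}
so that $L(d)\le 2\lambda^2\|\mathcal{L}\|^2$. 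It therefore suffices to prove $\|\mathcal{L}\|^2\le 12$.

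For the operator norm, I would estimate $\|\mathcal{L}(\mathcal{P},\mathcal{Q},\mathcal{R})\|_F^2$ directly from the definition. Writing each entry as a sum of three backward differences, $(p_{i,j,k}-p_{i-1,j,k})+(q_{i,j,k}-q_{i,j-1,k})+(r_{i,j,k}-r_{i,j,k-1})$, and using $(a+b+c)^2\le 3(a^2+b^2+c^2)$ entrywise, followed by $(a-b)^2\le 2(a^2+b^2)$ on each difference, one gets after summing over all $(i,j,k)$ that each directional term such as $\sum_{i,j,k}(p_{i,j,k}-p_{i-1,j,k})^2$ is at most $4\|\mathcal{P}\|_F^2$, since each entry of $\mathcal{P}$ appears in at most two squared terms and the zero boundary conditions only drop terms. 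Collecting the three directions yields
\begin{equation*}
\|\mathcal{L}(\mathcal{P},\mathcal{Q},\mathcal{R})\|_F^2 \le 3\bigl(4\|\mathcal{P}\|_F^2+4\|\mathcal{Q}\|_F^2+4\|\mathcal{R}\|_F^2\bigr) = 12\bigl(\|\mathcal{P}\|_F^2+\|\mathcal{Q}\|_F^2+\|\mathcal{R}\|_F^2\bigr),
\end{equation*}
which is exactly $\|\mathcal{L}\|^2\le 12$ and completes the bound $L(d)\le 24\lambda^2$.

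The two Lipschitz reductions are routine; the only place requiring care is the operator-norm computation. The factor $3$ (from three spatial directions, versus $2$ in the matrix/image case of \cite{TeboulleBeck}) is what changes the constant from $16\lambda^2$ in two dimensions to $24\lambda^2$ here, so the main thing to get right is the bookkeeping of the boundary terms so that the constant $4$ per direction is not inadvertently inflated. A sharper bound on $\|\mathcal{L}\|^2$ is in fact available through the spectrum of the one-dimensional difference operator, but the elementary estimate above already yields the stated constant, which is all that FISTA requires as an upper bound on the Lipschitz constant.
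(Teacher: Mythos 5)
Your proof is correct and follows essentially the same route as the paper: reduce to $L(d)\le 2\lambda^2\|\mathcal{L}\|^2$ via the nonexpansivity of $P_C$ and linearity of $\mathcal{L}$, then bound the operator norm by $12$ using elementary squared-difference inequalities. The only cosmetic difference is that you estimate $\|\mathcal{L}\|$ directly while the paper estimates $\|\mathcal{L}^T\|$ (these coincide), and your write-up is in fact the cleaner of the two, since the paper's displayed computation uses three test points where two suffice and contains sign typos in the expansion $(a-b)^2\le 2(a^2+b^2)$.
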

\begin{proof}
Let us consider $(\mathcal{P}_i, \mathcal{Q}_i, \mathcal{R}_i)$ such that $\mathcal{P}_i\in \mathbb{R}^{(m-1)\times n\times o}, \mathcal{Q}_i\in \mathbb{R}^{(m)\times (n-1)\times o}$ and $\mathcal{R}_i\in \mathbb{R}^{(m)\times n\times (o-1)}$, for $i=1,2,3$.
Then using \cref{grad} and the non-expansivity of the orthogonal projection operator \cite{projection}, we have:
\begin{equation*} 
\begin{split}
&\|\grad d(\mathcal{P}_1, \mathcal{Q}_1, \mathcal{R}_1)- \grad d(\mathcal{P}_1, \mathcal{Q}_1, \mathcal{R}_1) - \grad d(\mathcal{P}_1, \mathcal{Q}_1, \mathcal{R}_1)\|\\  &=\Big\|-2\lambda\mathcal{L}^T P_C\left(\mathcal{S}-\lambda \mathcal{L}(\mathcal{P}_1, \mathcal{Q}_1, \mathcal{R}_1)\right)\\
& \qquad \qquad+ 2\lambda\mathcal{L}^T P_C\left(\mathcal{S}
-\lambda \mathcal{L}(\mathcal{P}_2, \mathcal{Q}_2, \mathcal{R}_2)\right) + 2\lambda\mathcal{L}^T P_C(\mathcal{S}-\lambda \mathcal{L}(\mathcal{P}_3, \mathcal{Q}_3, \mathcal{R}_3))\Big\| 
\\
 & \leq 2\lambda \|\mathcal{L}^T\|\Big\| P_C(\mathcal{S}-\lambda \mathcal{L}(\mathcal{P}_3, \mathcal{Q}_3, \mathcal{R}_3))
 + P_C(\mathcal{S}-\lambda \mathcal{L}(\mathcal{P}_2, \mathcal{Q}_2, \mathcal{R}_2))\\
 & \qquad \qquad - P_C(\mathcal{S}-\lambda \mathcal{L}(\mathcal{P}_1, \mathcal{Q}_1, \mathcal{R}_1))\Big\|\\
 & \leq 2\lambda^2\|\mathcal{L}^T\| 
 \big\|\mathcal{L}(\mathcal{P}_2, \mathcal{Q}_2, \mathcal{R}_2)
 +\mathcal{L}(\mathcal{P}_3, \mathcal{Q}_3, \mathcal{R}_3)
 -\mathcal{L}(\mathcal{P}_1, \mathcal{Q}_1, \mathcal{R}_1)\big\|\\
 &\leq 2\lambda^2\|\mathcal{L}^T\|\big\|\mathcal{L}\|\|(\mathcal{P}_2, \mathcal{Q}_2, \mathcal{R}_2)+(\mathcal{P}_3, \mathcal{Q}_3, \mathcal{R}_3)-(\mathcal{P}_1, \mathcal{Q}_1, \mathcal{R}_1)\big\|\\
 &= 2\lambda^2\|\mathcal{L}^T\|^2\big\|(\mathcal{P}_2, \mathcal{Q}_2, \mathcal{R}_2)+(\mathcal{P}_3, \mathcal{Q}_3, \mathcal{R}_3)-(\mathcal{P}_1, \mathcal{Q}_1, \mathcal{R}_1)\big\|.
\end{split}
\end{equation*}

Also,
\begin{equation} \label{eq1}
\begin{split}
\|\mathcal{L}^T(\mathcal{T})\|^2
&=\sum_{i=1}^{m-1}\sum_{j=1}^n\sum_{k=1}^{K}(t_{i,j,k}-t_{i+1,j,k})^2+\sum_{i=1}^{m}\sum_{j=1}^{n-1}\sum_{k=1}^{K}(t_{i,j,k}-t_{i,j+1,k})^2\\
& \hspace{4.8cm} +\sum_{i=1}^{m}\sum_{j=1}^n\sum_{k=1}^{K-1}(t_{i,j,k}-t_{i,j,k+1})^2\\
&\leq 2\sum_{i=1}^{m-1}\sum_{j=1}^n\sum_{k=1}^{K}(t_{i,j,k}^2-t_{i+1,j,k}^2)+2\sum_{i=1}^{m}\sum_{j=1}^{n-1}\sum_{k=1}^{K}(t_{i,j,k}^2-t_{i,j+1,k}^2)\\
& \hspace{4.82cm} +2\sum_{i=1}^{m}\sum_{j=1}^n\sum_{k=1}^{K-1}(t_{i,j,k}^2-t_{i,j,k+1}^2)\\
&\leq 12\sum_{j=1}^n\sum_{k=1}^{K}t_{i,j,k}^2.
\end{split}
\end{equation}
Then, using equations (\ref{eq1}), we  have $\|\mathcal{L}^T\|\leq\sqrt{12}$ since $\|\mathcal{L}^T(\mathcal{T})\|^2\leq \sqrt{12}\|\mathcal{T}\|$.
Therefore, $L(d) \leq 2\lambda^2\|\mathcal{L}^T\|^2\leq 24\lambda^2$.
\end{proof}



Combining the results, we arrive at the algorithm that  is summarized in Algorithm~\ref{algodenoise}.
\begin{algorithm}
\renewcommand{\algorithmicrequire}{\textbf{Input:}}
\renewcommand\algorithmicensure {\textbf{Output:}}
\caption{Gradient Projection with FISTA.}
\label{algodenoise}
\begin{algorithmic}[1]
\REQUIRE 

\STATE $\mathcal{S}$--observed image
\STATE $\lambda$--regularization parameter
\STATE N--Number of iterations.

\ENSURE $x^*= P_{C}(\mathcal{S}-\lambda \mathcal{L}(p,q,r))$
\STATE Give initial tensors:$(p_0,q_0,r_0)=(0_{(m-1)\times n \times o},0_{m\times (n-1) \times o},0_{m\times n\times (o-1)})$ and $t_1=1$
\STATE Update step for k=1,2,...,N \\
Compute
\begin{align*}&(\mathcal{P}_k,\mathcal{Q}_k,\mathcal{R}_k)=P_{\mathbb{P}}[(\mathcal{P}_{k-1},\mathcal{Q}_{k-1},\mathcal{R}_{k-1})+\frac{1}{12\lambda}\mathcal{L}^T(P_{C}[\mathcal{S} -\lambda\mathcal{L}(\mathcal{P}_{k-1},\mathcal{Q}_{k-1},\mathcal{R}_{k-1})])],\\
&t_{k+1}=\frac{1+\sqrt{1+4t_k^2}}{2},\\
&(\mathcal{P}_{k+1},\mathcal{Q}_{k+1},\mathcal{R}_{k+1})=(\mathcal{P}_{k},\mathcal{Q}_{k},\mathcal{R}_{k})+(\frac{t_k-1}{t_{k+1}})(\mathcal{P}_k-\mathcal{P}_{k-1},\mathcal{Q}_k-\mathcal{Q}_{k-1},\mathcal{R}_k-\mathcal{R}_{k-1}).
\end{align*}
\STATE Set $x^*= P_{C}(\mathcal{S}-\lambda \mathcal{L}(\mathcal{P}_{N},\mathcal{Q}_{N},\mathcal{R}_{N}))$.
\end{algorithmic}
\end{algorithm}

\subsection{Numerical experiments}
We provide some numerical results for tensor denoising problems.
All experiments were run using the MATLAB R2016B version.
For the first experiment, we implemented our algorithm for a $384\times384\times3$ colored image from the image processing toolbox of MATLAB. We added some random noise to the image. The recovered images are given with different Peak-to-Noise-Ratio (PSNR) in Db and $\lambda$ values. Our experiment is illustrated in \cref{fig:pepper}.  The stopping criteria for this case are assumed to be $m_{\text{max}} = 200$ and $\epsilon_{\text{tol}} = 10^{-6}$. This experiment shows the importance of choosing a good regularization parameter; $\lambda =100$ is an example of a bad regularization.

\begin{figure}
     \centering
     \begin{subfigure}[b]{0.3\textwidth}
         \centering
         \includegraphics[width=\textwidth]{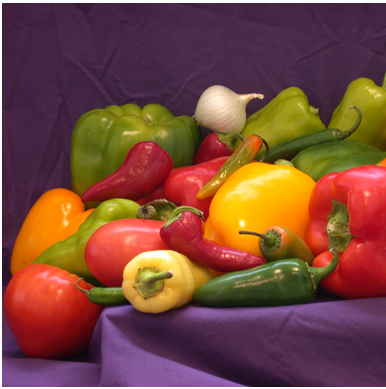}
         \caption{True Image}
     \end{subfigure}
     \hfill
     \begin{subfigure}[b]{0.3\textwidth}
         \centering
         \includegraphics[width=\textwidth]{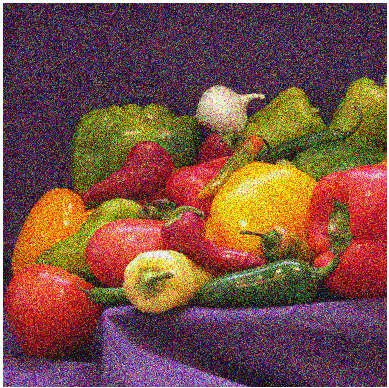}
         \caption{Input (PSNR 15.20)}
     \end{subfigure}
     \hfill
     \begin{subfigure}[b]{0.3\textwidth}
         \centering
         \includegraphics[width=\textwidth]{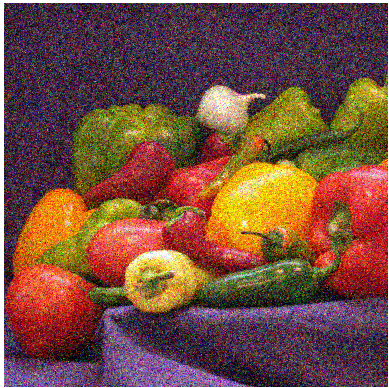}
         \caption{$\lambda=5$ (PSNR 17.39)}
     \end{subfigure}
     
     \bigskip
\begin{subfigure}{0.3\linewidth}
  \centering
  \includegraphics[width=\textwidth]{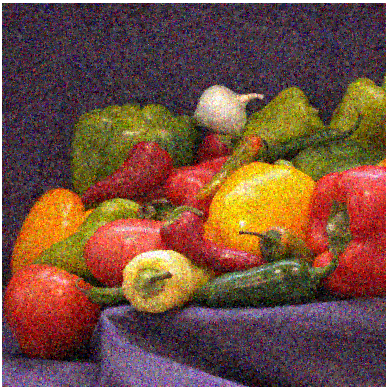}
  \caption{$\lambda=10$ (PSNR 19.68)}
\end{subfigure} 
\hfill
\begin{subfigure}{0.3\linewidth}
  \centering
  \includegraphics[width=\textwidth]{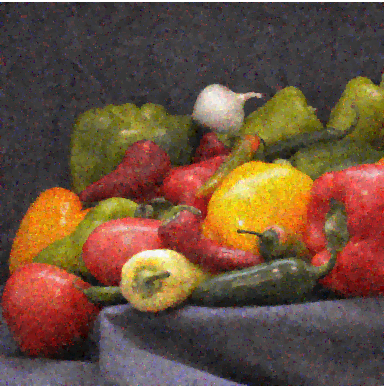}
  \caption{$\lambda=20$ (PSNR 22.47)}
\end{subfigure}
\hfill
\begin{subfigure}{0.3\linewidth}
  \centering
  \includegraphics[width=\textwidth]{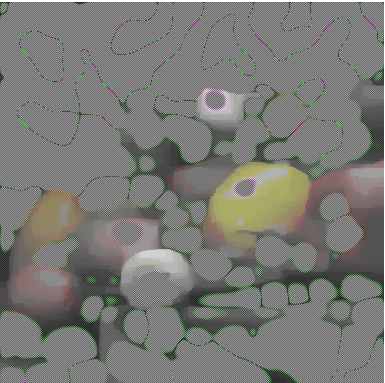}
  \caption{$\lambda=100$ (PSNR 6.19)}
\end{subfigure}
        \caption{TV-regularized denoising with increasing values of $\lambda$}
        \label{fig:pepper}
\end{figure}

For our next experiment we implemented our algorithm for a $246\times 246\times 3$ colored image, and we added some random noise to the image
as before.   Our experiment is illustrated in \cref{fig:brick}.
The recovered images are given with different PSNR in Db and $\lambda$ values. The stopping criteria for this case are assumed to be $m_{\text{max}} = 100$ and $\epsilon_{\text{tol}} = 10^{-6}$. We compared our results to the  Split Bregman method using the same stopping criteria. Our algorithm performed slightly better proven by the PSNR values.
Split Bregman \cite{Bregman} is a flexible algorithm for solving non-differentiable convex minimization problems, and it is especially efficient for problems with $L_1$- or TV-regularization.

\begin{figure}
     \centering
     \begin{subfigure}[b]{0.4\textwidth}
         \centering
         \includegraphics[width=.8\linewidth]{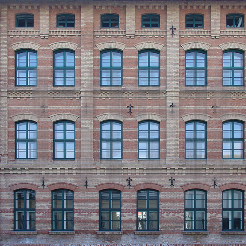}
         \caption{True Image}
     \end{subfigure}
     \begin{subfigure}[b]{0.4\textwidth}
         \centering
         \includegraphics[width=.8\linewidth]{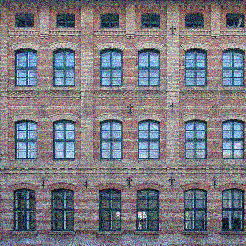}
         \caption{Input (PSNR 18.71)}
     \end{subfigure}
     
     \bigskip
\begin{subfigure}[b]{0.4\textwidth}
         \centering
         \includegraphics[width=.8\linewidth]{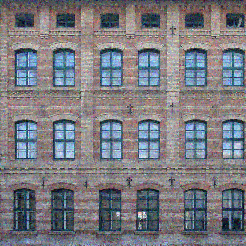}
         \caption{Our method (PSNR 22.35)}
     \end{subfigure}
    \begin{subfigure}[b]{0.4\textwidth}
  \centering
  \includegraphics[width=.8\linewidth]{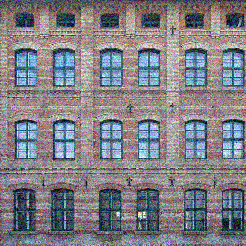}
  \caption{Split Bregman (PSNR 19.61)}
\end{subfigure}

\caption{TV-regularized denoising with increasing values of $\lambda$.}
\label{fig:brick}
\end{figure}

\section{Tensor Deblurring}
In this section, we present a second class of 
TV-re\-gu\-la\-ri\-za\-tion for tensors, namely the extension 
of the classical deblurring problem. 

Blurring always occurs in images or video recordings sometimes due to the fact that the optical system in a camera lens may be out of focus such that the incoming light is smeared out. Blurring can also occur in astronomical imaging when the light in a telescope is bent by turbulence.
The goal in image deblurring is to recover the original image by using a mathematical model of the blurring process. The issue here is the lost of some hidden information present in the blurred image. Unfortunately, there is no hope of recovering the exact image due to unavoidable errors like fluctuations in the recording process and approximation errors when representing the image with a limited number of digits. In image deblurring the biggest challenge is to create an algorithm to recover as much information as possible from the given data.

The ROF-model can be  extended without difficulties to the deblurring  problem by replacing the fidelity term $\|f-u\|^2$
by $\|f-K u\|^2$, where $K$ is a model for a blurring 
operator (e.g., a convolution operator).

Analogously, we may extend the (discretized) 
TV-deblurring functional 
to the tensor case by considering 
 the following deblurring functional:
\begin{eqnarray}\label{deblur}
\min_{\mathcal{T}\in C} \left \Vert \mathcal{A}(\mathcal{T}) - \mathcal{S} \right \Vert^2_F + 2\lambda TV(\mathcal{T}),
\end{eqnarray}
where $\mathcal{T}$ in $\mathbb{R}^{m\times n\times o}$ is the desired unknown image to be recovered, $\mathcal{A}$:$\mathbb{R}^{m\times n\times o}\to \mathbb{R}^{m\times n\times o}$ is a linear transformation representing some blurring operator, $\mathcal{S}$ is the observed noisy and blurred data, C is a closed convex set subset of  $\mathbb{R}^{m\times n\times o}$, and $\lambda$ is the regularization parameter. TV is the discrete total variation semi-norm as above,  either  isotropic ($TV_I$) or anisotropic ($TV_{L_1}$). 
Looking at problem (\ref{deblur}), we notice that the deblurring problem is a little more challenging than the denoising problem because of the 
additional blurring operator $\mathcal{A}$.

If we construct an equivalent smooth optimization algorithm for \eqref{deblur} via the approach in \cref{DenoiseSec}, i.e., by FISTA,  we will need to invert the linear transformation matrix $\mathcal{A}$ in the proximal operator, 
which cannot be done by a simple threshold operator as for the denoising case. 

To avoid this difficulty, we treat the TV deblurring problem \eqref{deblur} in two steps through the denoising problem;  see, e.g.,  \cite{TeboulleBeck}.
More precisely, the method iteratively solves in each step 
a deblurring problem with $f$ replaced by $x_n  - 
\frac{2}{L} A^*(A x_n-f)$. Thus, this approach  requires at each iteration the solution of the denoising problem \eqref{denoise} in addition to the gradient step.

\subsection{Details of the implementation}
At first we focus on  the blurring of  3D images. 
Convolving a point spread function (PSF) with a true image gives  a blurred image. In our case, we assume the PSF known and given. 
The PSF is the function that describes the blurring and the resulting image of the point source. 

Here we define convolution in N-D as follows:
\begin{align*} 
&(f * g)((n_1,n_2,...,n_n) \\
& \quad := \sum_{j_1=-\infty}^\infty\sum_{j_2=-\infty}^\infty...\sum_{j_n=-\infty}^\infty g(j_1,j_2,...,j_n)f(n_1-j_1,n_2-j_2,...,n_n-j_n),
\end{align*}
where $f$ is our input signal (or image or video) and $g$ the kernel; it is a ``filter" of the input image. 
If the filter is of a size smaller than the image, then  we can assume that the value of the input image and kernel is 0 everywhere outside the boundary.
The kernel can just be the identity kernel which is a single pixel with a value of 1, or in 2D, the kernel can be an averaging or mean filter. For instance the mean kernel for a $3\times 3$ image is a $3\times 3$ image where every entry is $\frac{1}{9}$.

For the numerical results, 
we are using a Gaussian filter to blur our 3D data. 
The PSF is a  3D gaussian function is defined as 
$$\frac{1}{\sigma^3(2\pi)^{\frac{3}{2}}}\exp\left(-\frac{x^2+y^2+z^2}{2\sigma^2}\right).$$
To blur an image and to recover a high quality deblurred image, we must consider its boundary which is the scene outside the boundaries of an image. Ignoring the boundary can give us a reconstruction that has some artifacts at the edges. 
There are different types boundary conditions we can use; see \cite[Ch.~4]{DeblurBook} for more details. 

We will focus on the periodic boundary condition, which we extend to the N-dimensional case. For periodic boundary conditions in 2D, 
the PSF  is a block circulant with circulant blocks (BCCB).
Note that if the PSF is a BCCB matrix then it is normal and its unitary spectral decomposition is ${A}={F}^*\Lambda {F}$ where ${F}$ is a 2D unitary discrete Fourier Transform.
We can also use this decomposition to find the eigenvalues since a circulant matrix is only defined by its first row or column; see \cite{DeblurBook}.
Before going to the periodic boundary condition for tensors, we need to define the structure of a tensor and then show how we can extend the idea of circulant block to the tensor case.

A tensor $\mathcal{A}\in \mathbb{R}^{I\times J\times K}$ is called totally diagonal if $a_{ijk}$ is nonzero only if $i=j=k$. Moreover, a third order tensor $\mathcal{A}\in \mathbb{R}^{I\times J\times K}$ is called partially diagonal; i.e., diagonal in two modes or more modes for every $k$, $\mathcal{A}(i,j,k)=0,$ for $i\neq j$.


 Considering a third-order tensor $\mathcal{A}\in \mathbb{R}^{n\times n\times n}$, where for every $k$, the $\mathcal{A}(:,:,k)$ slices are circulant, i.e., $\mathcal{A}(i,j,k) = \mathcal{A}(i',j',k)$ if $i-j\equiv i'-j'$ (mod $n$).
Hence, $\mathcal{A}$ is circulant with respect to the first and second modes, and we then define $\mathcal{A}$ to be $\{1,2\}$-circulant.



Furthermore, a tensor $\mathcal{A}\in \mathbb{R}^{I_1\times ...\times I_n}$ is called $\{l,k\}$-circulant, if $I_l=I_k=n$, and $$\mathcal{A}(:,...,:,i_l,...,i_k,:,...,:)=\mathcal{A}(:,...,:,i'_l,...,i'_k,:,...,:),$$ where $i_l-i_k=i'_l-i'_k$ (mod $n$).
Now using \cref{shift} we have that every column of the matrix $\mathcal{A}(:,:,k)$ can be constructed from $\mathcal{A}(:,1,k)$. This means that for every $j=1,...,n,$ 
\begin{align}\label{circulant}
\mathcal{A}(:,j,k)&=C^{j-1}\mathcal{A}(:,1,k)\quad \text{ and  }\\
\mathcal{A}(:,j,:)&=(C^{j-1})_1.\mathcal{A}(:,1,:),
\end{align} where $1$ is in the $k$th mode of $\mathcal{A}$ in \cref{circulant}  and the shift matrix is defined as
\begin{equation}\label{cirC}
C = \begin{bmatrix} 
    0 &         &        & 1 \\
    1 &  \ddots &        & \\
      & \ddots  & \ddots & \\
    0 &         & 1      & 0 
    \end{bmatrix}.
\end{equation}

\subsection{Diagonalization of tensors with circulant structure}

Here we would like to cover the case of 
periodic boundary condition for 
multidimensional matrices. For simplicity,
we  focus on the 3D case with an straightforward
generalization to the N-dimensional case. 
 We use periodic boundary condition because it is one of the most common boundary condition, and it has the advantage that its linear system has a circulant structure, which makes it possible to solve the problem using Fast Fourier Transform (FFT). To achieve this goal we will first define a tensor with circulant structure. 




We know that a circulant matrix is entirely determined by its first column (or row); i.e., for a matrix $A$ we have
\begin{equation}\label{shift}
A(:,j)=C^{j-1}a , \quad j=1,....,n \qquad \text{ and } \qquad 
 A(i,:)=(C^{i-1}b)^T, \quad i=1,....,n,
\end{equation} where $a$ and $b^T$ are the first column and row of $A$ respectively and the shift matrix $C$ is defined as in (\ref{cirC}).
Hence, any circulant matrix can be diagonalized by a Fourier matrix \cite{Davis}.
Note that the fact that the columns (or rows) of a circulant matrix can be written in terms of the power of the shift matrix $C$ times the first column (row), and this is what allows it to be diagonalized using the discrete Fourier transform.







\begin{prop}[Diagonalization of a circulant matrix \cite{Elden}]
Let $A\in \mathbb{R}^{n\times n}$ be a circulant matrix. Then A is diagonalized by a Fourier matrix $F$ as 
\begin{equation}\label{propcirc}
\begin{split}
A=F^*\Lambda_1F, \quad  \Lambda_1=\diag(\sqrt{n}Fa),\\
A=F\Lambda_2F^*, \quad  \Lambda_2=\diag(\sqrt{n}Fb), 
\end{split}
\end{equation}
where a and $b^T$ are the first column and row of A, respectively, and $\Lambda_1$ and $\Lambda_2$ are conjugate (i.e. $\Lambda_1 =  \Bar{\Lambda}_2$).
\end{prop}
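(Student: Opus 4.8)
The plan is to exploit the polynomial representation of $A$ in the shift matrix $C$ from \eqref{cirC}, together with the fact that $C$ is unitarily diagonalized by the Fourier matrix $F$. First I would record that, by the column relation in \eqref{shift}, the columns of $A$ are the successive powers $C^{j-1}a$ applied to the first column $a=(a_0,\dots,a_{n-1})^T$; reading off the entries of $C^k$ (namely $(C^k)_{ij}=1$ iff $i-j\equiv k \pmod n$) shows that this is equivalent to $A=\sum_{k=0}^{n-1} a_k C^k =: p(C)$, where $p(z)=\sum_{k=0}^{n-1} a_k z^k$. This reduces the whole statement to diagonalizing the single matrix $C$ and invoking the functional calculus.

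Second, I would recall that $C$ is unitarily diagonalized by the Fourier matrix: with $\omega=e^{2\pi i/n}$ and $F_{jk}=\tfrac{1}{\sqrt n}\omega^{-jk}$, one has $C=F^*DF$ with $D=\diag(1,\omega,\dots,\omega^{n-1})$, the Fourier vectors being exactly the eigenvectors of the cyclic shift with eigenvalues the $n$-th roots of unity. Applying $p$ then gives $A=p(C)=F^*p(D)F$ with $p(D)=\diag(p(1),p(\omega),\dots,p(\omega^{n-1}))$. The identification is immediate: the $j$-th eigenvalue is $p(\omega^{j})=\sum_k a_k\omega^{jk}$, which for the chosen normalization of $F$ is precisely the $j$-th entry of $\sqrt n\,Fa$. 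This yields the first identity $A=F^*\Lambda_1 F$ with $\Lambda_1=\diag(\sqrt n\,Fa)$.

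Third, I would obtain the second identity and the conjugacy relation in one stroke by conjugating the first. Since $A$ is real and $\bar F=F^*$, taking complex conjugates in $A=F^*\Lambda_1 F$ gives $A=\bar A=F\bar\Lambda_1 F^*$, so it suffices to check $\bar\Lambda_1=\diag(\sqrt n\,Fb)$. Here I would use that the first row $b$ of a circulant matrix is the cyclic reversal of its first column, $b_k=a_{-k\bmod n}$; a one-line index substitution then shows $\overline{Fa}=F^*a=Fb$, which simultaneously gives $\Lambda_2:=\bar\Lambda_1=\diag(\sqrt n\,Fb)$ and the stated relation $\Lambda_1=\bar\Lambda_2$.

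The step I expect to demand the most care is not any single computation but the consistent bookkeeping of conventions — the placement of the factor $\sqrt n$, the sign in the exponent of $F$, and whether $F$ or $F^*$ sits on the left — because the precise form of $\Lambda_1$, $\Lambda_2$, and the conjugacy claim all hinge on fixing these once and for all. Once $F$ is normalized so that $\sqrt n\,Fa$ reproduces the eigenvalues $p(\omega^{j})$, every remaining identity follows mechanically.
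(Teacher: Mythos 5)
Your proof is correct and follows exactly the route the paper indicates: the paper states this proposition without proof, citing Eld\'en and Davis, but its preceding remark---that the columns of a circulant matrix are powers of the shift matrix $C$ applied to the first column, ``and this is what allows it to be diagonalized using the discrete Fourier transform''---is precisely the argument you carry out via $A=p(C)$ and the spectral decomposition of $C$. Your derivation of the second identity and the conjugacy $\Lambda_1=\bar{\Lambda}_2$ by conjugating the first (using that $A$ is real, $\bar{F}=F^*$, and the first row is the cyclic reversal of the first column) is sound, and you rightly flag that the only delicate point is fixing the sign/normalization conventions of $F$ consistently.
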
 
The same idea can be used here for tensors as well. If we consider tensors whose slices are circulant with respect to a pair of modes, then we can write them in terms of powers of the shift matrix $C$, which in turn makes it possible to diagonalize the tensor using the Fourier transform. Considering shifts of slices, it is straightforward to obtain the following relations, 

\begin{lemma}[see \cite{Elden}]
If $\mathcal{A} \in \mathbb{R}^{I_1 \times \cdot \times I_N}$ is $\{l,k\}$-circulant, then for every $1 \leq i_k \leq I_k$ we have
\[\mathcal{A}(:,\cdots,:,\cdots,i_k,\cdots,:)=(C^{i_k -1})_l \cdot \mathcal{A}(:,\cdots,:,\cdots,1,\cdots,:), \]
and for every $1 \leq i_l \leq I_l$
\[\mathcal{A}(:,\cdots,i_l,\cdots,:,\cdots,:)=(C^{i_l -1})_k \cdot \mathcal{A}(:,\cdots,1,\cdots,:,\cdots,:), \]
where $1$ is in the $k$th and $l$th mode of $\mathcal{A}$ in the first and second equations, respectively.
\end{lemma}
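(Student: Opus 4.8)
The plan is to verify the identity entry by entry, reducing the tensor statement to a single-index computation that invokes the defining property of $\{l,k\}$-circulant tensors together with the explicit structure of the shift matrix $C$. Since the two displayed equations are of the same type, with the roles of $l$ and $k$ interchanged, I would prove the first one in full and obtain the second by the symmetric argument.

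First I would fix all indices in the modes other than $l$ and $k$, so that the only quantities allowed to vary are the $l$-th index $i_l$ and the $k$-th index $i_k$; by the circulant hypothesis each such entry $\mathcal{A}(\ldots, i_l, \ldots, i_k, \ldots)$ depends only on $i_l - i_k \pmod n$. Next I would write out the mode-$l$ product on the right-hand side explicitly: its $i_l$-entry is $\sum_{j_l}(C^{i_k-1})_{i_l, j_l}\,\mathcal{A}(\ldots, j_l, \ldots, 1, \ldots)$. The key observation is that, by the definition of $C$ in \eqref{cirC}, one has $(C^{i_k-1})_{i_l,j_l}=1$ precisely when $i_l - j_l \equiv i_k - 1 \pmod n$, so the sum collapses to the single term with $j_l \equiv i_l - i_k + 1 \pmod n$. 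For that surviving index the pair $(j_l, 1)$ satisfies $j_l - 1 \equiv i_l - i_k \pmod n$, which is exactly the difference associated with the target entry on the left.

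Finally I would invoke the $\{l,k\}$-circulant property once more to conclude $\mathcal{A}(\ldots, j_l, \ldots, 1, \ldots)=\mathcal{A}(\ldots, i_l, \ldots, i_k, \ldots)$, which identifies the $i_l$-entry of the right-hand side with the $i_l$-entry of the slice on the left. Since $i_l$ was arbitrary, the two slices agree, proving the first equation; interchanging $l$ and $k$ yields the second. This recovers and generalizes the matrix relation \eqref{shift}, which is precisely the $N=2$ instance with $l=1$, $k=2$.

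I do not expect a genuine mathematical obstacle here, since the entire content is already encoded in the circulant hypothesis and the action of $C$. The only real difficulty is notational bookkeeping: correctly interpreting the compressed multi-index slice notation and the mode-$l$ matrix product $(C^{i_k-1})_l\cdot(\,\cdot\,)$, and keeping the modular arithmetic on the $l$- and $k$-indices consistent throughout. In particular, care is needed to ensure that the shift direction built into $C$ matches the sign convention $i_l - i_k$ used in the definition, so that the single surviving term in the collapsed sum is the intended one and not its inverse shift.
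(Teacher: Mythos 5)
Your proof is correct, and it supplies essentially the argument the paper only alludes to: the paper states this lemma without proof, deferring to the cited reference and remarking that it follows by ``considering shifts of slices,'' which is exactly your entry-wise computation. Your key step --- that $(C^{m})_{i,j}=1$ precisely when $i-j\equiv m \pmod n$, so the mode-$l$ (resp.\ mode-$k$) product collapses to the single entry whose index difference matches $i_l-i_k$, after which the $\{l,k\}$-circulant property finishes the identification --- is the right one, and you correctly flag the only delicate point, namely keeping the shift direction of $C$ consistent with the sign convention in the definition.
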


\begin{thm} \label{circco}
Let $\mathcal{A}\in \mathbb{R}^{I_1\times...\times I_6}$ be such that for every $i=1,2,3,$ $\mathcal{A}$ is $\{i,i+3\}$-circulant. Then for every $i_{4},...,i_{2n}$, $$\mathcal{A}(:,:,:,i_{4},...,i_{6})=(C^{i_{4}-1},...,C^{i_{6}-1})_{1,2,3}\mathcal{A}(:,:,:,1,...,1).$$
\end{thm}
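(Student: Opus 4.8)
The plan is to apply the preceding lemma three times, peeling off one of the last three indices at each step, and then to exploit the fact that shift operators acting in distinct modes commute in order to assemble the three shifts into the single multilinear product $(C^{i_4-1},C^{i_5-1},C^{i_6-1})_{1,2,3}$.

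First I would record the three structural relations supplied by the hypothesis: $\mathcal{A}$ is simultaneously $\{1,4\}$-, $\{2,5\}$-, and $\{3,6\}$-circulant, so each of the first three modes is tied to exactly one of the last three. The crucial observation is that the circulant relation in a given pair of modes is unaffected by fixing indices in the remaining modes; hence the lemma may be invoked slicewise even after other trailing indices have been frozen, and the corresponding shift always lands in the partner mode predicted by the $\{l,k\}$ pairing.

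Next I would carry out the reduction in sequence. Starting from $\mathcal{A}(:,:,:,i_4,i_5,i_6)$, the $\{3,6\}$-circulant property together with the lemma gives $\mathcal{A}(:,:,:,i_4,i_5,i_6)=(C^{i_6-1})_3\,\mathcal{A}(:,:,:,i_4,i_5,1)$; applying the $\{2,5\}$-circulant property to the remaining factor gives $\mathcal{A}(:,:,:,i_4,i_5,1)=(C^{i_5-1})_2\,\mathcal{A}(:,:,:,i_4,1,1)$; and one further application of the $\{1,4\}$-circulant property gives $\mathcal{A}(:,:,:,i_4,1,1)=(C^{i_4-1})_1\,\mathcal{A}(:,:,:,1,1,1)$. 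Composing these three identities yields
\[
\mathcal{A}(:,:,:,i_4,i_5,i_6)=(C^{i_6-1})_3\,(C^{i_5-1})_2\,(C^{i_4-1})_1\,\mathcal{A}(:,:,:,1,1,1).
\]

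Finally, since the three shifts act in the distinct modes $1$, $2$, and $3$, their mode-wise applications commute, and the composition collapses into the single multilinear operator $(C^{i_4-1},C^{i_5-1},C^{i_6-1})_{1,2,3}$, which is exactly the asserted formula. The main obstacle I anticipate is purely one of bookkeeping: tracking which mode each shift acts in and verifying carefully that freezing the not-yet-treated trailing indices leaves intact the circulant relation used at each step. The commutativity of operators in disjoint modes is standard and requires no genuine computation.
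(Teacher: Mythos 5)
Your argument is correct. Note that the paper itself supplies no proof of this theorem: it is stated bare, in the style of the surrounding results imported from the Rezghi--Eld\'en reference, so there is nothing to compare against. Your route --- three successive applications of the preceding lemma, one for each circulant pair $\{3,6\}$, $\{2,5\}$, $\{1,4\}$, followed by the observation that shifts acting in the three distinct modes commute and assemble into the single multilinear product $(C^{i_4-1},C^{i_5-1},C^{i_6-1})_{1,2,3}$ --- is exactly the intended argument, and your care about why freezing the remaining trailing indices preserves each circulant relation (the shift $(C^{i_k-1})_l$ acts only in mode $l\le 3$ and therefore commutes with slicing in modes $4,5,6$) addresses the only point where the bookkeeping could go wrong.
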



\begin{thm}[see \cite{Elden}]\label{2ndthm}
Let $\mathcal{A}\in \mathbb{R}^{I_1\times I_2\times I_3}$be $\{l,k\}$-circulant. Then $\mathcal{A}$ satisfies $\mathcal{A}=(F^*,F)_{l,k}.\Omega$, where $\Omega$ is a $\{l,k\}$-diagonal tensor with diagonal elements $\mathcal{D}=(\sqrt{n}F)_l.\mathcal{A}(:,...,1,:,...,:)$; here $1$ is in the $k$th mode of $\mathcal{A}$. In particular, $\Omega(\Bar{i})=\delta_{i_li_k}\mathcal{D}(\Bar{i_k})$, with the multi-indices $\Bar{i}=(i_1,i_2,i_3)$ and $\Bar{i_k}=(i_1,...,i_{k-1},i_{k+1},...,i_3)$.
\end{thm}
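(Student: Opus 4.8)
The plan is to reduce the claim to the matrix diagonalization already established in the Proposition on circulant matrices, by slicing the tensor along the distinguished pair of modes $(l,k)$. First I would fix all spectator indices, i.e., all modes other than $l$ and $k$, and introduce the matrix $M \in \mathbb{R}^{n \times n}$ defined by $M_{pq} = \mathcal{A}(\ldots, p, \ldots, q, \ldots)$, where $p$ sits in mode $l$ and $q$ in mode $k$. The $\{l,k\}$-circulant hypothesis says precisely that $\mathcal{A}(\ldots, i_l, \ldots, i_k, \ldots)$ depends only on $i_l - i_k \pmod n$, so $M_{pq}$ is constant along the diagonals $p - q \equiv \text{const}$; that is, $M$ is a circulant matrix in the classical sense, and the slicing reduction already built into \cref{2ndthm} and the preceding Lemma makes this rewriting legitimate.

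Next I would apply the Proposition to $M$: since $M$ is circulant, $M = F^* \Lambda F$ with $\Lambda = \diag(\sqrt{n}\, F m)$, where $m = M(:,1)$ is the first column. Reading off the entries, $m_p = M_{p1} = \mathcal{A}(\ldots, p, \ldots, 1, \ldots)$ with $1$ in mode $k$, so as the spectator indices vary, $m$ traces out exactly the slice $\mathcal{A}(:,\ldots,1,\ldots,:)$, and $\sqrt{n}\, F m$ traces out $\mathcal{D} = (\sqrt{n}F)_l . \mathcal{A}(:,\ldots,1,\ldots,:)$. This identifies the diagonal of $\Lambda$ with $\mathcal{D}$ at each spectator configuration, which is the content of the explicit formula for $\mathcal{D}$.

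The remaining step is to recast the family of matrix identities $M = F^* \Lambda F$ as the single tensor identity $\mathcal{A} = (F^*, F)_{l,k}. \Omega$. Here I would use that the Fourier matrix is symmetric, $F_{aq} = F_{qa}$, so that the classical matrix product $F^* \Lambda F$ coincides with the mode-product expression $\Lambda \times_l F^* \times_k F$; concretely, using this symmetry together with the diagonality of $\Lambda$, one has $M_{pq} = \sum_a F^*_{pa}\Lambda_{aa}F_{aq} = \sum_{a,b} F^*_{pa} F_{qb} \Lambda_{ab}$. Since this holds verbatim for every choice of spectator indices, the tensor $\Omega$ whose $(l,k)$-slices are the diagonal matrices $\Lambda$, i.e., the $\{l,k\}$-diagonal tensor with $\Omega(\bar i) = \delta_{i_l i_k} \mathcal{D}(\bar{i_k})$, satisfies $\mathcal{A} = (F^*, F)_{l,k}. \Omega$, which is the assertion together with its ``in particular'' clause.

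I expect the only genuine obstacle to be the notational bookkeeping of the multi-mode contracted products — keeping track of which Fourier factor acts in which mode and verifying that the diagonal structure of $\Omega$ survives the contraction across all spectator configurations. The mathematical content itself is entirely inherited from the matrix case once the slicing reduction and the symmetry of $F$ are in place.
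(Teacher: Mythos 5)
Your argument is correct, but note that the paper itself offers no proof of this theorem: it is stated as imported from \cite{Elden} (Rezghi and Eld\'en), so there is nothing internal to compare against. Your route --- freeze the spectator mode, observe that the resulting $n\times n$ slice $M_{pq}=\mathcal{A}(\ldots,p,\ldots,q,\ldots)$ is a classical circulant, apply the matrix diagonalization $M=F^*\Lambda F$ with $\Lambda=\diag(\sqrt{n}Fm)$, and reassemble --- is exactly the natural reduction, and you correctly isolate the one nontrivial bookkeeping point, namely that the symmetry $F_{aq}=F_{qa}$ of the Fourier matrix is what lets the ordinary matrix product $F^*\Lambda F$ be rewritten as the two-mode contraction $(F^*,F)_{l,k}.\Omega$ with $\Omega(\bar i)=\delta_{i_l i_k}\mathcal{D}(\bar{i_k})$.
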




\begin{thm}[\cite{Elden}]\label{corollary 2}
 Let $\mathcal{A}\in \mathbb{R}^{I_1\times ...\times I_{6}}$ be such that for every $i=1,2,3$, $\mathcal{A}$ is $\{i,i+3\}$-circulant, and $\mathcal{X}\in \mathbb{R}^{I_1\times I_2\times I_3}$. The linear system of equations \begin{equation}\label{system}
 \mathcal{Y}=\langle\mathcal{A},\mathcal{X}\rangle_{1:3;1:3},\end{equation} is equivalent to $\Bar{\mathcal{Y}}=\mathcal{D}.*\Bar{\mathcal{X}}$, where $\langle\mathcal{A},\mathcal{X}\rangle_{1:3;1:3}$ are the contracted product of two tensors (see \cite{prod} chapter 2),
 \begin{align*} \Bar{\mathcal{Y}}&=(F^*,...,F^*)_{1:N}.\mathcal{Y},
 \qquad \Bar{\mathcal{X}}=(F^*,...,F^*)_{1:N}.\mathcal{X} \quad \text { and } \\
 \mathcal{D}&=(\sqrt{I_1}F,...,\sqrt{I_N}F)_{1:N}.\mathcal{A}(:,...,:,1,...,1).
 \end{align*} 
\end{thm}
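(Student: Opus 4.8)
The plan is to recognize this statement as a corollary obtained by applying the single-pair diagonalization of \cref{2ndthm} simultaneously to the three circulant mode-pairs $(1,4)$, $(2,5)$, and $(3,6)$. Since these pairs are disjoint, the associated mode products commute and the three diagonalizations can be performed independently and in any order. First I would use \cref{circco} together with \cref{2ndthm} to write
\[
\mathcal{A} = (F^*,F^*,F^*,F,F,F)_{1:6}.\Omega,
\]
where $\Omega$ is simultaneously $\{1,4\}$-, $\{2,5\}$-, and $\{3,6\}$-diagonal, its mode-$1,2,3$ factors carry $F^*$ (the input side) and its mode-$4,5,6$ factors carry $F$ (the output side), and its diagonal tensor is precisely $\mathcal{D}=(\sqrt{I_1}F,\sqrt{I_2}F,\sqrt{I_3}F)_{1:3}.\mathcal{A}(:,:,:,1,1,1)$ as furnished by \cref{2ndthm}.

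Next I would substitute this factorization into $\mathcal{Y}=\langle\mathcal{A},\mathcal{X}\rangle_{1:3;1:3}$. Because the contraction runs over modes $1,2,3$ only, the mode products $F,F,F$ sitting on modes $4,5,6$ are untouched by the contraction and may be pulled outside onto the output modes, giving
\[
\mathcal{Y}=(F,F,F).\big\langle (F^*,F^*,F^*)_{1:3}.\Omega,\ \mathcal{X}\big\rangle_{1:3;1:3}.
\]
The key elementary identity is the adjoint relation between a mode product and a contracted product: contracting a mode that has been premultiplied by a matrix $M$ equals contracting the bare mode after premultiplying the other factor by $M^T$ on the matching mode. Since the inner product carries no conjugation and the Fourier matrix is symmetric, so that $(F^*)^T=F^*$, applying this to each of modes $1,2,3$ transfers the three $F^*$ factors onto $\mathcal{X}$, where they assemble exactly into $\Bar{\mathcal{X}}=(F^*,F^*,F^*)_{1:3}.\mathcal{X}$.

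It then remains to evaluate $\langle\Omega,\Bar{\mathcal{X}}\rangle_{1:3;1:3}$. Here I would invoke the diagonal structure of $\Omega$: since $\Omega(\Bar{i})$ is nonzero only when $i_1=i_4$, $i_2=i_5$, $i_3=i_6$ with value $\mathcal{D}(i_4,i_5,i_6)$, the triple sum over $i_1,i_2,i_3$ collapses and leaves the Hadamard product, so that $\mathcal{Y}=(F,F,F).(\mathcal{D}.*\Bar{\mathcal{X}})$. Finally, forming $\Bar{\mathcal{Y}}=(F^*,F^*,F^*)_{1:3}.\mathcal{Y}$ and using the unitarity $F^*F=I$ on each output mode cancels the outer $F$ factors and yields $\Bar{\mathcal{Y}}=\mathcal{D}.*\Bar{\mathcal{X}}$, as claimed; the equivalence is exact in both directions because every mode map $F,F^*$ is invertible.

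The main obstacle I anticipate is bookkeeping rather than conceptual difficulty: carefully tracking which mode each Fourier factor acts on and verifying that the adjoint and commutation rules for mixing mode products with contracted products produce $F^*$ (and not $F$ or $\Bar{F}$) on $\mathcal{X}$. This hinges on the symmetry of $F$ and on the disjointness of the three mode-pairs, which is what lets the three single-pair diagonalizations proceed independently without interfering.
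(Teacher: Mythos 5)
Your argument is correct. Note, however, that the paper states this result without proof, importing it verbatim from Rezghi and Eld\'en \cite{Elden}, so there is no in-paper proof to compare against; your route---applying the single-pair diagonalization of \cref{2ndthm} to the three disjoint pairs $\{1,4\},\{2,5\},\{3,6\}$ (which is legitimate precisely because a Fourier mode product in one pair preserves the circulant structure in the others), transferring the $F^*$ factors onto $\mathcal{X}$ using the symmetry of $F$, collapsing the contraction against the diagonal tensor, and cancelling the outer $F$'s by unitarity---is the natural derivation and is essentially the one given in the cited reference.
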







From the above theorems, using the periodic boundary condition, we know that the 3D case is just a generalisation of the 1D and 2D cases; see \cite{DeblurBook}. 3D or higher-dimensional cases are handled by increasing the number of modes. 

If we have two third-order tensors $\mathcal{X}\in \mathbb{R}^{n\times n\times n}$ and $\mathcal{Y}\in \mathbb{R}^{n\times n\times n}$ being the true and blurred image, respectively, and $\mathcal{G}$ be the PSF array with center at $\mathcal{G}(l_1,l_2,l_3)$, then 
the rotated PSF will be defined by $\hat{G} = (C^{l_1}J, C^{l_2}J,C^{l_3}J)_{1:3}.\mathcal{G}$. 
Furthermore, the relation between the true and blurred image can be written as tensor-tensor linear system 
$$ \mathcal{Y}=\langle A, \mathcal{X}\rangle_{1:3;1:3},
\qquad 
\mathcal{A}(:,:,:,i,j,k)=(C^{i-1},C^{j-1}, C^{k-1})_{1:3}.\hat{G},$$
where $\mathcal{A}$ is a $\{1,4\}$, $\{2,5\}$ and $\{3,6\}$-circulant tensor.
Hence by \cref{corollary 2} this linear system is equivalent to $\Bar{\mathcal{Y}}=\Bar{\mathcal{G}}.*\Bar{\mathcal{X}}$, 
where 
\begin{align*} 
\Bar{\mathcal{Y}}&= (F^*,F^*,F^*)_{1:3}\cdot\mathcal{Y}, \qquad  \Bar{\mathcal{X}}= (F^*,F^*,F^*)_{1:3}\cdot\mathcal{X} \quad \text{ and } \\
\Bar{\mathcal{G}}&= (\sqrt{n}F,\sqrt{n}F,F)\sqrt{n}_{1:3}\cdot\mathcal{A}(:,:,:,1,1,1) = (\sqrt{n}F,\sqrt{n}F,F)\sqrt{n}_{1:3}\cdot \mathcal{P}.
\end{align*} 
So,
$\mathcal{Y} = \text{\text{fftn}}(\text{fftn}(\hat{G})\cdot*\text{ifft}(\mathcal{X}))$ with the discrete Fourier transform denoted as 
$\text{fftn}$ and  $\text{ifft}$ its inverse. 
Therefore, a naive solution to the deblurring problem can be calculated by
$\mathcal{X} = \text{fftn}(\text{ifft}(\mathcal{Y}\cdot/\text{fftn}(\hat{\mathcal{G}}))$. This, however, is not feasible since $\hat{\mathcal{G}}$
may have very small (or zero) values and thus cannot be inverted.

\section{Numerical Experiments}

All of our experiments were run using MATLAB R2016B version.
Our first experiment we implemented our algorithm on a $240\times240\times114$
video that is affected by  a $15\times 15\times 3$ Gaussian blur. Also we added some random noise. The PNSR of the blurred and noisy video is 33.978db. The regularization parameter was chosen to be $\lambda=0.01$. The recovered video is given with different PSNR 37.042 Db. Our experiment is illustrated in \cref{fig:video}.  The stopping criteria for this case are assumed to be $m_{\text{max}} = 100$.

\begin{figure}
\centering
    \begin{subfigure}{2.0\linewidth}
    \includegraphics[width=0.5\textwidth]{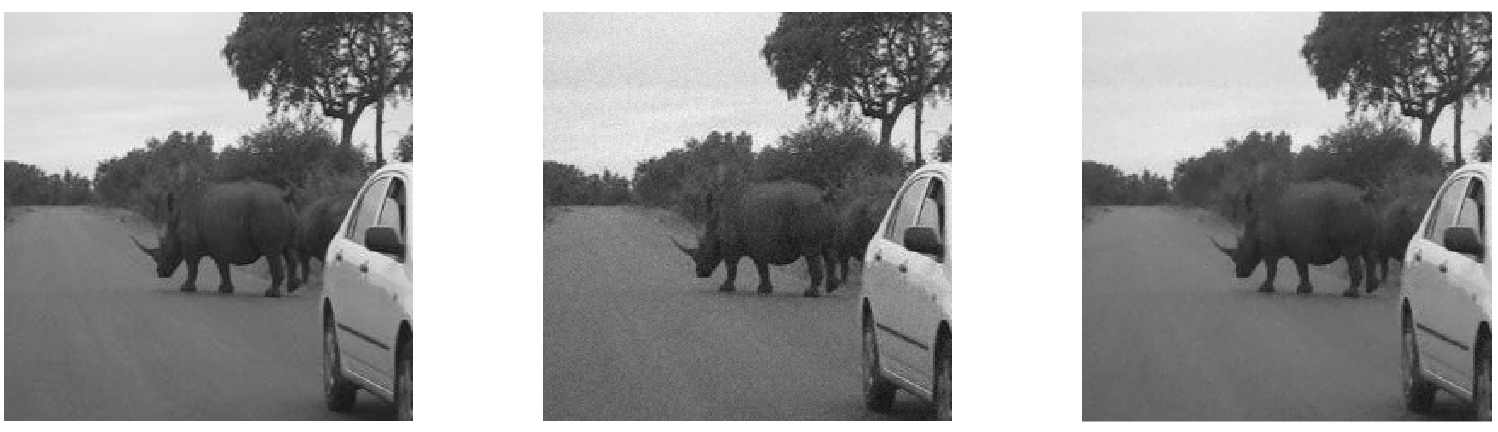}
    \end{subfigure} %
     \hfill
     \begin{subfigure}{2.0\linewidth}
     \includegraphics[width=0.5\textwidth]{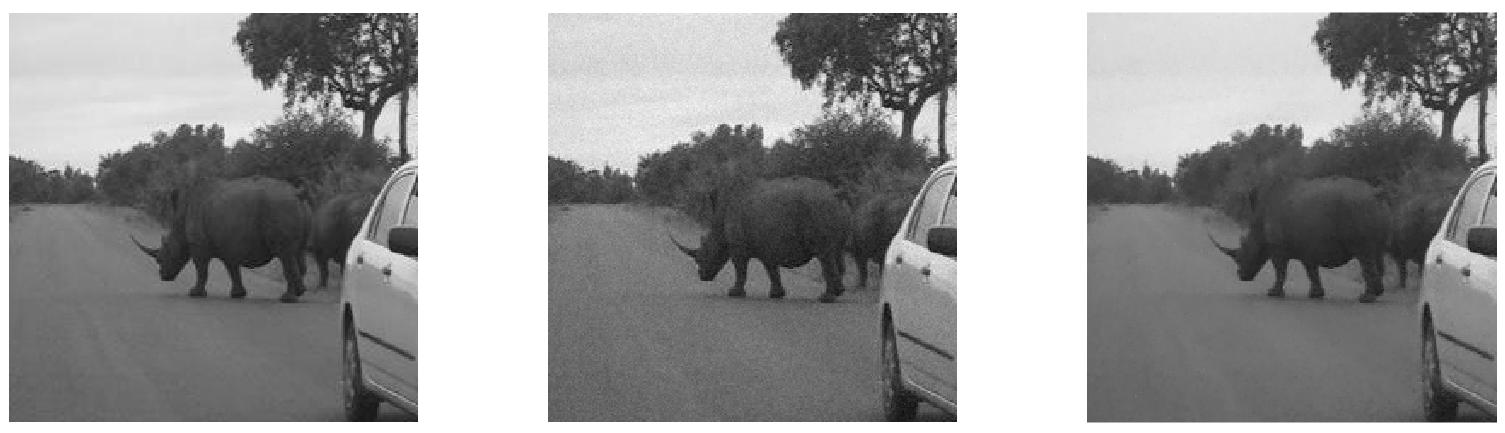}
   \end{subfigure}

   \hfill
\begin{subfigure}{2.0\linewidth}
   \includegraphics[width=0.5\textwidth]{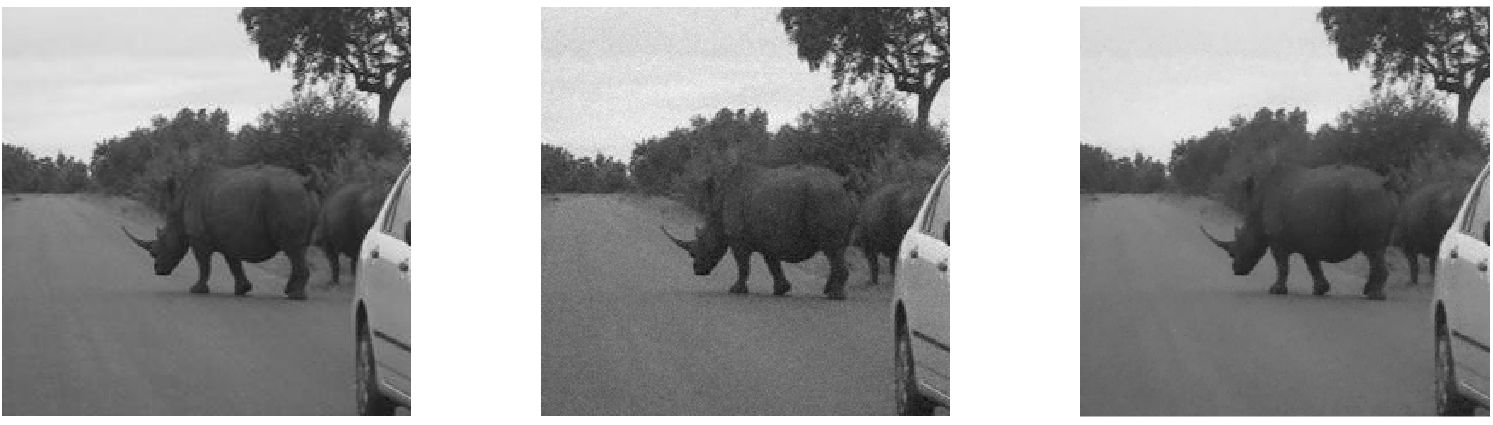}
   \end{subfigure}

\caption{Three frames of the video with True image (left), Noisy input (PNSR 33.978) and recovered image (right) (PNSR 37.042) at each row.}
\label{fig:video}
\end{figure}

Our second experiment we implemented our algorithm on a $200\times200\times3$ colored image from the  MATLAB image processing toolbox. We applied a $15\times 15\times 3$ Gaussian blur and added some random noise. 
Note that in the experiments for \cref{fig:pepperblur}, \cref{fig:pepblur}, and \cref{fig:brickblur}, a blurring is also applied in the color 
mode, which leads to a less colorful input image with ''averaged`` colors.  
The PNSR of the blurred and noisy image is 19.92 Db. The regularization parameter was chosen to be $\lambda=0.02$. The recovered image is given with different PSNR 22.13 Db. Our experiment is illustrated in \cref{fig:pepperblur}.  The stopping criteria for this case is   $m_{\text{max}} = 100$.

\begin{figure}
     \centering
     \begin{subfigure}[b]{0.3\textwidth}
         \centering
         \includegraphics[width=\textwidth]{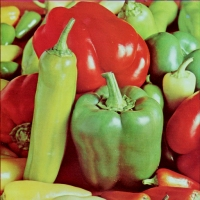}
         \caption{True Image}
     \end{subfigure}
     \hfill
     \begin{subfigure}[b]{0.3\textwidth}
         \centering
         \includegraphics[width=\textwidth]{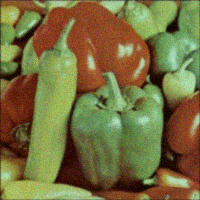}
         \caption{Input (PSNR 19.92)}
     \end{subfigure}
     \hfill
     \begin{subfigure}[b]{0.3\textwidth}
         \centering
         \includegraphics[width=\textwidth]{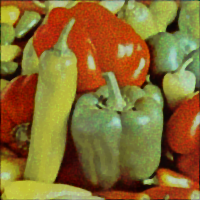}
         \caption{Output (PSNR 22.13)}
     \end{subfigure}
        \caption{TV-regularized deblurring with PSNR values.}
        \label{fig:pepperblur}
\end{figure}

Our next experiment we implemented our algorithm on a $768\times768\times3$ colored image from the MATLAB image processing toolbox. We applied a $45\times 45\times 3$ Gaussian blur, again with added  random noise. 
The PNSR of the blurred and noisy image is 20.67 Db. The regularization parameter was chosen to be $\lambda=0.01$. The recovered image is given with different PSNR 22.7 Db. Our experiment is illustrated in \cref{fig:pepblur}.  The stopping criteria for this case is  $m_{\text{max}} = 150$.


\begin{figure}
     \centering
     \begin{subfigure}[b]{0.3\textwidth}
         \centering
         \includegraphics[width=\textwidth]{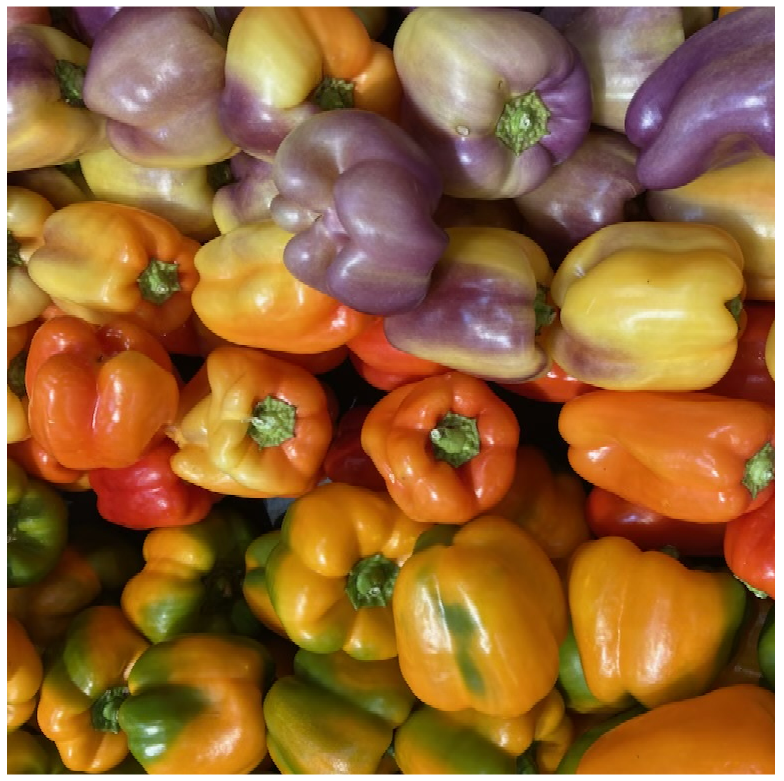}
         \caption{True Image}
     \end{subfigure}
     \hfill
     \begin{subfigure}[b]{0.3\textwidth}
         \centering
         \includegraphics[width=\textwidth]{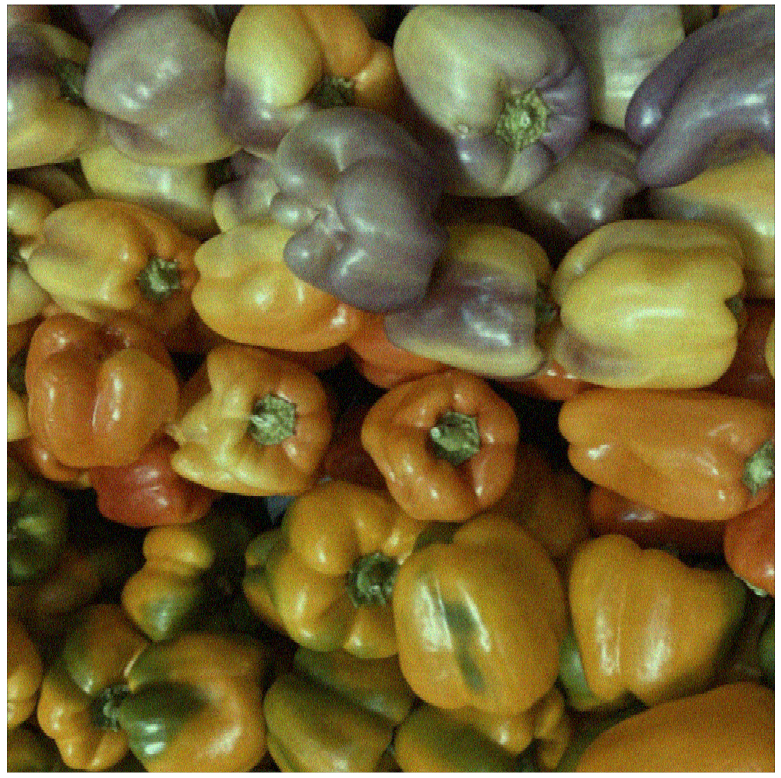}
         \caption{Input (PSNR 20.67)}
     \end{subfigure}
     \hfill
     \begin{subfigure}[b]{0.3\textwidth}
         \centering
         \includegraphics[width=\textwidth]{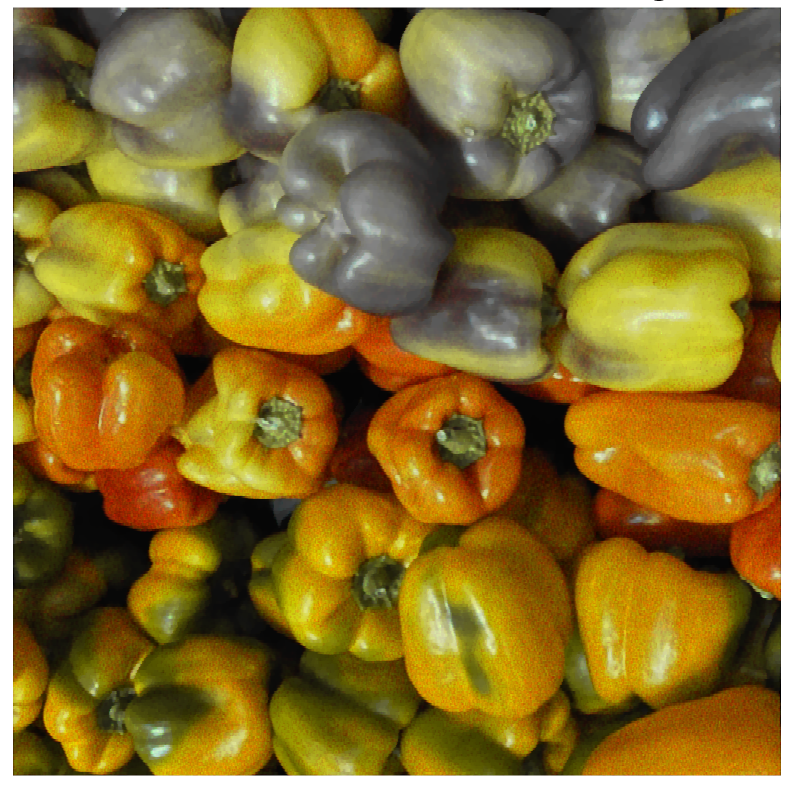}
         \caption{Output (PSNR 22.7)}
     \end{subfigure}
        \caption{TV-regularized deblurring with PSNR values.}
        \label{fig:pepblur}
\end{figure}

Our last experiment we implemented our algorithm on a $246\times246\times3$ colored image. We applied a $15\times 15\times 3$ Gaussian blur with some random noise. The PNSR of the blurred and noisy image is 19.03 Db. The regularization parameter was chosen to be $\lambda=0.02$. The recovered image is given with different PSNR 22.47 Db. Our experiment is illustrated in \cref{fig:brickblur}.  The stopping criteria for this case is  $m_{\text{max}} = 150$.

\begin{figure}
     \centering
     \begin{subfigure}[b]{0.3\textwidth}
         \centering
         \includegraphics[width=\textwidth]{brick_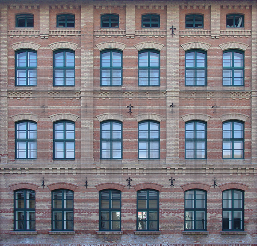}
         \caption{True Image}
     \end{subfigure}
     \hfill
     \begin{subfigure}[b]{0.3\textwidth}
         \centering
         \includegraphics[width=\textwidth]{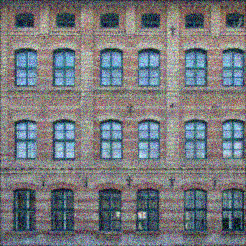}
         \caption{Input (PSNR 19.03)}
     \end{subfigure}
     \hfill
     \begin{subfigure}[b]{0.3\textwidth}
         \centering
         \includegraphics[width=\textwidth]{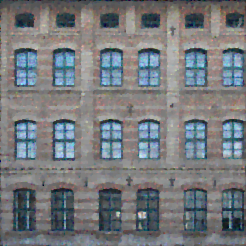}
         \caption{Output (PSNR 22.47)}
     \end{subfigure}
        \caption{TV-regularized deblurring with PSNR values.}
        \label{fig:brickblur}
\end{figure}

\section{Conclusion}
In this article we applied fast   gradient-based schemes for the constrained total variation based tensor denoising and deblurring problems. 
The methodology is general enough to cover other types of nonsmooth regularizers. The proposed model is simple and relies on combining a dual approach with a fast gradient projection scheme as in \cite{TeboulleBeck}. 
Our  numerical results show a good  performance of the algorithms 
for the considered nonsmooth variational regularization problem for tensors.

\section*{Acknowledgments}
This material is based upon work supported by the National Science Foundation under Grant No. DMS-1439786 while  C. Navasca was in residence at the Institute for Computational and Experimental Research in Mathematics in Providence, RI, during the Model and Dimension Reduction in Uncertain and Dynamic Systems Program. C. Navasca is also in  part supported by National Science Foundation No. MCB-2126374.

\bibliographystyle{siamplain}

\begin{thebibliography}{1}

\bibitem{BeckFast} {\sc A. Beck, M. Teboulle}, {\em A fast iterative shrinkage-thresholding algorithm for linear inverse problems}, SIAM journal on imaging sciences, Vol. 2, no. 1, pp. 183-202 (2009).

\bibitem{BeckGrad} {\sc A Beck, M Teboulle}, {\em Gradient-based algorithms with applications to signal recovery}, Convex optimization in signal processing, 2009.
 
\bibitem{TeboulleBeck} {\sc A. Beck and M. Teboulle}, {\em Fast Gradient-Based Algorithms for Constrained Total Variation Image Denoising and Deblurring Problems}, in IEEE Transactions on Image Processing, vol. 18, no. 11, pp. 2419-2434, Nov. 2009, doi: 10.1109/TIP.2009.2028250.

\bibitem{ROF} {\sc L. Rudin, S. J. Osher, and E. Fatemi}, {\em Nonlinear total variation based noise removal algorithms}, Physica D., 60:259–268, 1992. [also in Experimental Mathematics: Computational Issues in Nonlinear Science (Proc. Los Alamo Conf. 1991)].

\bibitem{IMG} {\sc A. S. Carasso}, {\em Linear and nonlinear image deblurring: A documented study}, SIAM J. Numer. Anal., 36(6):1659–1689, 1999.

\bibitem{Demoment} {\sc G. Demoment}, {\em Image reconstruction and restoration: Overview of common estimation structures and problems}, IEEE Trans. Acoust., Speech, Signal Process., 37(12):2024–2036, 1989.

\bibitem{Kat} {\sc A. K. Katsaggelos}, {\em Digital Image Restoration}, Springer-Verlag, 1991.

\bibitem{Nee}{\sc R. Neelamani, H. Choi, and R. G. Baraniuk}, {\em Wavelet-based deconvolution for ill-conditioned systems}, In Proc. IEEE ICASSP, volume 6, pages 3241–3244, Mar. 1999.

\bibitem{Chambolle} {\sc A. Chambolle and P. L. Lions}, {\em Image recovery via total variation minimization and related problems}, Numer. Math., 76(2):167–188, 1997.

\bibitem{Rudin} {\sc L. Rudin and S. Osher}, {\em Total variation based image restoration with free local constraints}, Proc. 1st IEEE ICIP, 1:31–35, 1994.

\bibitem{Antonin} {\sc A. Chambolle}, {\em An algorithm for total variation minimization and applications}, J. Math. Imag. Vis., vol. 20, no. 1–2, pp. 89–97, 2004.

\bibitem{Cham} {\sc A. Chambolle}, {\em Total variation minimization and a class of binary MRF models}, Lecture Notes Comput. Sci., vol. 3757, pp. 136–152, 2005.

\bibitem{Ani}
{\sc Stanley J. Osher and Selim Esedoglu}, {\em Decomposition of Images by the Anisotropic Rudin-Osher-Fatemi Model}, Comm. Pure Appl. Math. 57, pp. 1609-1626, (2004).

\bibitem{nesterov}
{\sc Y. E. Nesterov},
{\em Smooth minimization of nonsmooth functions}, Math. Program., 103(2005), pp. 127-152.

\bibitem{Nest} {\sc Y. E. Nesterov}, {\em A method for solving the convex programming problem with convergence rate O(1/k2)}, Dokl. Akad. Nauk SSSR, 269 (1983), pp. 543–547 (in Russian).

\bibitem{Wei} {\sc J. Weickert}, {\em Anisotropic Diffusion in Image Processing}, vol. 1, Teubner, Stuttgart, Germany, 1998.

\bibitem{denintro} {\sc Boying Wu, Elisha Achieng Ogada, Jiebao Sun, Zhichang Guo}, {\em A Total Variation Model Based on the Strictly Convex Modification for Image Denoising}, Abstract and Applied Analysis, vol. 2014, Article ID 948392, 16 pages, 2014. https://doi.org/10.1155/2014/948392

\bibitem{Chan} {\sc T. F. Chan and J. Shen}, {\em Mathematical models for local nontexture inpaintings}, SIAM Journal on Applied Mathematics, vol. 62, no. 3, pp. 1019–1043, 2001.

\bibitem{Vogel} {\sc C. R. Vogel}, {\em Total variation regularization for Ill-posed problems}, Tech. Rep., Department of Mathematical Sciences, Montana State University, 1993.

\bibitem{Vese} {\sc L. Vese}, {\em Problemes variationnels et EDP pour lA analyse dA images et lA evolution de courbes}, [Ph.D. thesis], Universite de Nice Sophia-Antipolis, 1996.

\bibitem{Rao} {\sc Y. Chen, S. Levine, and M. Rao}, {\em Variable exponent, linear growth functionals in image restoration}, SIAM Journal on Applied Mathematics, vol. 66, no. 4, pp. 1383–1406, 2006.

\bibitem{Tony} {\sc T. F. Chan and S. Esedo\=glu}, {\em Aspects of total variation regularized L1 function approximation}, SIAM Journal on Applied Mathematics, vol. 65, no. 5, pp. 1817–1837, 2005.

\bibitem{Mulet} {\sc T. Chan, A. Marquina, and P. Mulet}, {\em High-order total variation-based image restoration}, SIAM Journal on Scientific Computing, vol. 22, no. 2, pp. 503–516, 2000.

\bibitem{Val} {\sc F. Andreu-Vaillo, V. Caselles, and J. M. Mazón}, {\em Parabolic QuasiLinear Equations Minimizing Linear Growth Functionals}, vol. 223, Springer, 2004.

\bibitem{Avog} {\sc R. Acar and C. R. Vogel}, {\em Analysis of bounded variation penalty methods for ill-posed problems}, Inverse Problems, vol. 10, no. 6, pp. 1217–1229, 1994.

\bibitem{Strong} {\sc D. M. Strong and T. F. Chan}, {\em Spatially and scale adaptive total variation based regularization and anisotropic diffusion in image processing}, in Diusion in Image Processing, UCLA Math Department CAM Report, Cite-seer, 1996.

\bibitem{Carter} {\sc Carter, J.L.}, {\em Dual method for total variation-based image restoration}, Report 02-13, UCLA CAM (2002)

\bibitem{Gol} {\sc Chan, T.F., Golub, G.H., Mulet, P.}, {\em A nonlinear primal-dual method for total variation based image restoration}, SIAM Journal of Scientific Computing 20, 1964–1977 (1999).

\bibitem{Eke} {\sc Ekeland, I., Temam}, {\em Convex Analysis and Variational Problems}, SIAM Classics in Applied Mathematics. SIAM (1999).

\bibitem{DeblurBook}{\sc Per Christian Hansen, James G. Nagy, Dianne P. O'Leary}, {\em Deblurring Images: Matrices, Spectra and Filtering}, SIAM, 2006.

\bibitem{Elden} {\sc Mansoor Rezghi and Lars Eldén}, {\em Diagonalization of tensors with circulant structure}, Linear Algebra and its Applications, vol.435, no.3, pages 422-447,2011.

\bibitem{Davis} {\sc Philip J. Davis}, {\em Circulant Matrices}, Division of Applied Mathematics, Brown University, JOHN WILEY \& SONS, New York, 1979.

\bibitem{Bregman} {\sc Getreuer, Pascal}, {\em Rudin-Osher-Fatemi Total Variation Denoising using Split Bregman}, Image Processing On Line, vol.2, pages 74--95, 2012.

\bibitem{places} {\sc Bolei Zhou, Agata Lapedriza, Aditya Khosla, Aude Oliva, and Antonio Torralba}, {\em Places: A 10 million Image Database for Scene Recognition}, IEEE Transactions on Pattern Analysis and Machine Intelligence (2017).

\bibitem{projection} {\sc F. Facchinei and J. S. Pang}, {\em Finite-dimensional variational inequalities and complementarity problems}, in Springer Series in Operations Research. New York: Springer-Verlag, 2003, vol. II.

\bibitem{prod} {\sc S. Kobayashi, K. Nomizu}, {\em Foundations of Differential Geometry}, Interscience Publisher, 1963.

\bibitem{Bruck} {\sc R. Bruck}, {\em An iterative solution of a variational inequality for certain monotone operator in a Hilbert space}, Bulletin of the American Mathematical Society, vol. 81, no. 5, pp. 890–892, 1975.

\bibitem{Passty} {\sc PASSTY, G. B.}, {\em Ergodic Convergence to a Zero of the Surp of Monotone Operators in Hilbert Space}, Journal of Mathematical Analysis and Applications 72:383-390, (1979).

\end{thebibliography}
  
\end{document}


\maketitle

\section{A detailed example}

Here we include some equations and theorem-like environments to show
how these are labeled in a supplement and can be referenced from the
main text.
Consider the following equation:
\begin{equation}
  \label{eq:suppa}
  a^2 + b^2 = c^2.
\end{equation}
You can also reference equations such as \cref{eq:matrices,eq:bb} 
from the main article in this supplement.

\lipsum[100-101]

\begin{theorem}
  An example theorem.
\end{theorem}

\lipsum[102]
 
\begin{lemma}
  An example lemma.
\end{lemma}

\lipsum[103-105]

Here is an example citation: \cite{KoMa14}.

\section[Proof of Thm]{Proof of \cref{thm:bigthm}}
\label{sec:proof}

\lipsum[106-112]

\section{Additional experimental results}
\Cref{tab:foo} shows additional
supporting evidence. 

\begin{table}[htbp]
{\footnotesize
  \caption{Example table}  \label{tab:foo}
\begin{center}
  \begin{tabular}{|c|c|c|} \hline
   Species & \bf Mean & \bf Std.~Dev. \\ \hline
    1 & 3.4 & 1.2 \\
    2 & 5.4 & 0.6 \\ \hline
  \end{tabular}
\end{center}
}
\end{table}

\bibliographystyle{siamplain}
\bibliography{references}


\maketitle

\section{A detailed example}

Here we include some equations and theorem-like environments to show
how these are labeled in a supplement and can be referenced from the
main text.
Consider the following equation:
\begin{equation}
  \label{eq:suppa}
  a^2 + b^2 = c^2.
\end{equation}
You can also reference equations such as \cref{eq:matrices,eq:bb} 
from the main article in this supplement.

\lipsum[100-101]

\begin{theorem}
  An example theorem.
\end{theorem}

\lipsum[102]
 
\begin{lemma}
  An example lemma.
\end{lemma}

\lipsum[103-105]

Here is an example citation: \cite{KoMa14}.

\section[Proof of Thm]{Proof of \cref{thm:bigthm}}
\label{sec:proof}

\lipsum[106-112]

\section{Additional experimental results}
\Cref{tab:foo} shows additional
supporting evidence. 

\begin{table}[htbp]
{\footnotesize
  \caption{Example table}  \label{tab:foo}
\begin{center}
  \begin{tabular}{|c|c|c|} \hline
   Species & \bf Mean & \bf Std.~Dev. \\ \hline
    1 & 3.4 & 1.2 \\
    2 & 5.4 & 0.6 \\ \hline
  \end{tabular}
\end{center}
}
\end{table}

\bibliographystyle{siamplain}
\bibliography{references}